\newtheorem{theorem}{Theorem}[section]
\newtheorem{lemma}[theorem]{Lemma}
\newtheorem{corollary}[theorem]{Corollary}
\theoremstyle{definition}
\theoremstyle{remark}
\newtheorem{remark}[theorem]{Remark}
\numberwithin{equation}{section}
\renewcommand{\epsilon}{\varepsilon}
\renewcommand{\phi}{\varphi}
\newcommand*{\rom}[1]{\expandafter\@slowromancap\romannumeral #1@}
\begin{document}

\title[The $p$-Laplacian overdetermined problem on Riemannian manifolds ]{The $p$-Laplacian overdetermined problem on Riemannian manifolds}

\author[Q. Ruan]{Qihua Ruan}
\address[Qihua Ruan]{Key Laboratory of Financial Mathematics of Fujian Province University, Putian University, Fujian Putian, 351100, P.R., China}
\email{ruanqihua@163.com}

\author[Q. Huang]{Qin Huang}
\address[Qin Huang]{Fujian Key Laboratory of Financial Information Processing, Putian University, Fujian Putian, 351100, P.R., China}
\email{qinhuang78@163.com}

\author[F. Chen]{Fan Chen*}
\address[Fan Chen]{School of Mathematics and Statistics, Fujian Normal University, Qishan Campus, Fuzhou, 350117, P.R., China;  Fujian Key Laboratory of Financial Information Processing, Putian University, Fujian Putian, 351100, P.R., China}
\email{chenfan@163.com}


\thanks{\textbf{Keywords}: $p$-Laplacian, Singular set, Overdetermined problem, Ricci curvature}
\thanks{\textbf{MSC(2020)}: 35A23, 35J92, 35N25}


\begin{abstract}
  In this paper, we study the overdetermined problem for the $p$-Laplacian equation on complete noncompact Riemannian manifolds  with nonnegative Ricci curvature. We prove that the regularity results of weak solutions of the $p$-Laplacian equation and obtain some integral identities. As their applications, we give the proof of the $p$-Laplacian overdetermined problem and obtain some well known results such as the Heintze-Karcher inequality and the Soap Bubble Theorem.
\end{abstract}

\thanks{This work was supported by NSF of China(No. 11971253) and NSF of Fujian Province(No. 2021J011101).}


\maketitle
\setcounter{tocdepth}{1}

\section{Introduction and main result}\label{Introduction and main results}

The celebrated Serrin's \cite{Serrin[1]} theorem states that the overdetermined boundary value problem
\begin{equation}\label{Serrin}
\begin{cases}
-\Delta u=1 & \text{in $\Omega$},\\
u=0   & \text{on $\partial\Omega$},\\
u_{\nu}=c & \text{on $\partial\Omega$},
\end{cases}
\end{equation}
admits a solution for some positive constant $c$ if and only if $\Omega$ is a ball of radius $nc$ and $u(x)=\frac{n^{2}c^{2}-\vert x\vert^{2}}{2n}$. Here, $\Omega$ denotes a bounded domain in $\mathbb{R}^{n}(n\geq2)$, with
 a smooth boundary $\partial\Omega$ and $u_{\nu}$ is the outward normal derivative of $u$ on $\partial\Omega$. The Serrin's proof method is now known as the method of the moving plane. Later, Weinberger \cite{WH} used some integral identities and some basic inequalities to give a very simple proof. This result has its physical explanation, the associated Dirichlet problem describes a viscous incompressible fluid moving in straight parallel streamlines through a straight pipe of given cross sectional form $\Omega$, more detail see \cite{Ros and Sicbaldi[2],S1956}.

Serrin's symmetry result has been extensively studied and generalized also to the case of quasilinear elliptic problems. The $p$-Laplacian is defined by
\begin{eqnarray}\label{p-Laplacian}
\Delta_{p}u={\rm div}(\lvert\nabla u\rvert^{p-2}\nabla u),
\end{eqnarray}
for $p>1$.
It has been proved that if the following overdetermined problem
\begin{equation}\label{p-Laplacian overdetermined problem}
\begin{cases}
-\Delta_{p} u=1 & \text{in $\Omega$},\\
u=0   & \text{on $\partial\Omega$},\\
u_{\nu}=c & \text{on $\partial\Omega$},
\end{cases}
\end{equation}
admits a weak solution in the bounded domain $\Omega\subset\mathbb{R}^{n}$ , then $\Omega$ is a ball. The overdetermined problem \eqref{p-Laplacian overdetermined problem} was first proved by Garofalo and Lewis \cite{GL} through using Weinberger's method. In 2000, Damascelli and Pacella \cite{DP} proved this result via Serrin's approach in the case of $1<p<2$. Later,  Brock and Henrot \cite{BH} proposed a different proof of the result via Steiner symmetrization for $p\geq2$. The literature about overdetermined problems of the elliptic equations is so wide that it is impossible to report it exhaustively. We refer the interested reader to \cite{Cianchi[18],Bianchini and Ciraolo[29],Wang and Xia[23]} and the references therein.

In 2020, Colasuonno and Ferrari \cite{FF} considered the following Dirichlet $p$-Laplacian problem
\begin{equation}\label{*}
\begin{cases}
-\Delta_{p} u=1 & \text{in $\Omega$},\\
u=0   & \text{on $\partial\Omega$},
\end{cases}
\end{equation}
in $\mathbb{R}^{n}$. With use of some integral identities, they prove the equivalence of the following Soap Bubble Theorem and the Serrin-type symmetry result for the overdetermined problem \eqref{p-Laplacian overdetermined problem}, for $1<p<2$.

\begin{theorem}{\rm(\cite{FF})}\label{CR1.1}
Let $\Omega\subset\mathbb{R}^{n}$ be a bounded domain with boundary $\partial\Omega$ of class $C^{2,\alpha}$, and denote by $H=H(x)$ the mean curvature of $\partial\Omega$. Suppose that $u$ solves \eqref{*} for $1<p<2$, and the set of critical points of $u$ has zero measure. Then the following statements are equivalent:

{\rm(A)} $\Omega$ is a ball;

{\rm(B)} $\vert u_{\nu}(x)\vert^{p-2}u_{\nu}(x)=-\frac{1}{nH(x)}$ for every $x\in\partial\Omega$;

{\rm(C)} $u$ is radial;

{\rm(D)} $H(x)=H_{0}$ for every $x\in\partial\Omega$ and some constant $H_{0}$;

Moreover, if one of the above conclusions holds, then

{\rm(E)} $\vert\nabla u(x)\vert=(\frac{1}{nH_{0}})^{\frac{1}{p-1}}$ for every $x\in\partial\Omega$.
\end{theorem}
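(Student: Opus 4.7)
The plan is to establish the equivalences via the scheme $(C) \Rightarrow (A) \Rightarrow (B), (D), (E)$ by direct computation on a ball, and the substantive reverse implications $(B) \Rightarrow (A)$ and $(D) \Rightarrow (A)$ by exploiting the integral identities developed in the preceding sections together with a Heintze-Karcher type inequality.

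First I would dispose of the easy implications. The implication $(C) \Rightarrow (A)$ is immediate: the zero set of a nonconstant radial function is a sphere, so $\Omega$ must be a ball. Conversely, if $\Omega = B_R(x_0)$, uniqueness of the weak solution of \eqref{*} combined with the rotational invariance of both the equation and the domain forces $u$ to be radial, with the explicit profile
\[
u(x) = \frac{p-1}{p\,n^{1/(p-1)}}\bigl(R^{p/(p-1)} - |x-x_0|^{p/(p-1)}\bigr).
\]
Differentiating gives $|\nabla u(x)| = (R/n)^{1/(p-1)}$ on $\partial\Omega$ while $H \equiv 1/R$, so (B), (D), and (E) all follow with $H_0 = 1/R$.

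The nontrivial part is to prove $(B) \Rightarrow (A)$ and $(D) \Rightarrow (A)$. Both rest on two integral identities derived earlier in the paper. Testing $-\Delta_p u = 1$ against $1$ gives the Pohozaev-type identity
\[
|\Omega| = \int_{\partial\Omega} |u_\nu|^{p-1}\,d\sigma,
\]
while a Heintze-Karcher type inequality adapted to the $p$-Laplacian setting controls $\int_\Omega u\,dx$ by a weighted boundary integral involving $H^{-1}$, with equality iff $\partial\Omega$ is totally umbilical. Assuming (B), i.e.\ $|u_\nu|^{p-1} = 1/(nH)$, the Pohozaev identity becomes $n|\Omega| = \int_{\partial\Omega} H^{-1}\,d\sigma$, which is exactly the equality case of the classical Heintze-Karcher inequality and forces $\Omega$ to be a ball. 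Assuming (D), constancy of $H$ combined with the two identities and a Cauchy-Schwarz step on $\partial\Omega$ saturates the inequality chain and forces $|u_\nu|$ to be constant on $\partial\Omega$; one then recovers (B) and hence (A).

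The main obstacle is the low regularity of weak solutions for $1 < p < 2$: the Hessian of $u$ is defined in a classical sense only on the open set $\{|\nabla u| > 0\}$, so the derivation of the integral identities and the discussion of their equality cases must be carried out via cutoffs away from the critical set. The hypothesis that the set of critical points of $u$ has zero measure is precisely what enables the limiting procedure in these cutoffs without the appearance of extra singular boundary terms; this is the content of the regularity results recalled earlier in the paper and is invoked here to legitimize the integral identities used throughout the argument.
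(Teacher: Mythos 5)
The paper does not actually prove Theorem~\ref{CR1.1}: it is quoted from \cite{FF} as motivation, and the paper's own contribution is to establish the substantive implications --- (B)\,$\Leftrightarrow$\,(A) and (D)\,$\Rightarrow$\,(A) --- for all $p>1$ and on manifolds with nonnegative Ricci curvature (Theorem~\ref{overdetermined problem theorem} and Corollary~\ref{overdetermined problem theorem 2}). Your strategy is essentially the same Weinberger/Ros scheme: integral identities for the $P$-function, positivity of $\int_\Omega\mathcal{L}_uP\,{\rm dv}$, and the rigidity case of the Heintze--Karcher inequality. Within that scheme your shortcuts are legitimate and in one place cleaner than the paper's: for (B)\,$\Rightarrow$\,(A) you pass directly from the divergence-theorem identity \eqref{9.1} to $\int_{\partial\Omega}H^{-1}{\rm ds}=n\vert\Omega\vert$ and invoke the equality case of Heintze--Karcher, whereas the paper first derives $\int_\Omega\mathcal{L}_uP\,{\rm dv}=0$ from \eqref{Fundamental Identity} and then reads the same conclusion off \eqref{SB and HK}; and your Cauchy--Schwarz step for (D)\,$\Rightarrow$\,(A) is exactly the completed square in \eqref{SBT}. (Minor caveat: \eqref{SBT} uses the specific value $H_0=\vert\partial\Omega\vert/(n\vert\Omega\vert)$, so for an arbitrary constant $H_0$ as in statement (D) you should first note, via the Minkowski identity, that a constant mean curvature necessarily equals this value. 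Also, the Heintze--Karcher inequality bounds $\int_{\partial\Omega}H^{-1}{\rm ds}$ below by $n\vert\Omega\vert$; it does not ``control $\int_\Omega u$,'' though this misstatement does not affect how you actually use it.)

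Two points need repair. First, your justification of (C)\,$\Rightarrow$\,(A) (``the zero set of a nonconstant radial function is a sphere'') is false as stated: the zero level set of a radial function is in general a union of concentric spheres, and an annulus does carry a positive radial solution of \eqref{*} whose critical set (a single sphere) has measure zero, so radiality of $u$ alone does not force $\Omega$ to be a ball; one must either use the precise notion of ``radial'' intended in \cite{FF} or rule out the annulus by noting that (B) and (D) fail on its inner boundary where $H<0$. Second, and more substantively, everything in your argument hinges on two facts that you defer entirely: that $\mathcal{L}_uP\ge0$ off the critical set when $1<p<2$, and that the divergence theorem may be applied to the field $(p-2)\vert\nabla u\vert^{p-4}\langle\nabla u,\nabla P\rangle\nabla u+\vert\nabla u\vert^{p-2}\nabla P$ across the critical set. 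The first is precisely where the range $1<p<2$ is delicate and is the content of the refined inequality \eqref{MPL1} together with Lemmas~\ref{More Precise Lemma} and~\ref{LuP>0 Lemma}; the second is handled in the paper not by a cutoff argument but by the regularity $\vert\nabla u\vert^{p-1}\in W^{1,2}_{\rm loc}$ (Lemma~\ref{lemma Lou}), Stampacchia's lemma on the vanishing of second derivatives on the critical set, and the distributional divergence theorem of Cuesta--Tak\'a\v{c} (Lemma~\ref{divergence theorem}). Your sketch correctly locates the difficulty, but a complete proof must supply these two ingredients rather than gesture at them.
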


According to Lou \cite{Lou}, also see Antonini etc.\cite{ACF}, we know that the critical set of $u$ in the above theorem  has zero  measure. So this assumption can be removed. For $p$-Laplacian operator, the normal condition of $p$ is that any $p>1$. However, Theorem \ref{CR1.1} only solves the problem of $1<p<2$. For $p\geq2$, Deng and Yin \cite{DY} were inspired by Valtorta's result \cite{VD} and proved the equivalent conclusion of the above theorem by employing a more refined inequality of the Cauchy Schwartz inequality, see the inequality \eqref{refined estimate}. Unfortunately, their method can not be used to the case of $1<p<2$.

In this paper, we want to find a method to prove the above theorem for any $p>1$. We also find that the equivalence of (A) and (B) in the above theorem also holds on a Riemannian manifold. We notice that the equivalence problem of (A) and (E) in the above theorem on a Riemannian manifold is an open problem until now. To our knowledge, there are only few results about the overdetermined problem on some special manifolds, such as constant curvature space, see \cite{AA,GV,FAW}.

Before stating the main results, let us make some remarks about some notations to be used. Let $(M,g)$ be an $n$-dimensional complete noncompact Riemannian manifold. We use $\langle,\rangle$ to denote the inner product with respect to both $g$ and $g_{\partial M}$ when no confusion occurs. We denote by $\nabla$, $\Delta$ and $\nabla^{2}$ the gradient, the Laplacian and the Hessian, respectively. Let $\rm dv$ and $\rm ds$ be the canonical volume element and area element of $M$ and $\partial M$. We shall use $u_{i}$ and $u_{ij}$ to denote, respectively, $\nabla_{i}u$ and $\nabla_{j}\nabla_{i}u$. Finally, repeated indices indicate a summation with respect to the index.

For convenience, we introduce the so-called $P$-function:
\begin{eqnarray}\label{P-function}
P:=\frac{p-1}{p}\vert\nabla u\vert^{p}+\frac{1}{n} u,
\end{eqnarray}
where $u$ is a solution of \eqref{*}.

In order to obtain the Soap Bubble Theorem and the Heintze-Karcher inequality, we need the following integral identity.
\begin{theorem}\label{Fundamental Identity Theorem}
Let $(M,g)$ be an $n$-dimensional complete noncompact Riemannian manifold. Assume that $\Omega\subset M$ be a bounded and connected domain with boundary $\partial \Omega$ of class $C^{2,\alpha}$, and $H=H(x)$ the mean curvature of $\partial\Omega$. Let $u\in W^{1,p}(\Omega)$ be a weak solution to the problem \eqref{*} for any $p>1$.  Then the following identity holds
\begin{eqnarray}\label{Fundamental Identity}
\frac{1}{(p-1)(n-1)}\int_{\Omega}\mathcal{L}_{u}P{\rm dv}=\frac{1}{n}\vert\Omega\vert-\int_{\partial\Omega}H\vert u_{\nu}\vert^{2p-2}{\rm ds}.
\end{eqnarray}
Here, $\mathcal{L}_{u}$ is a linearized operator, see \eqref{7.1}, and $\vert\Omega\vert$ denotes the volume of $\Omega$.
\end{theorem}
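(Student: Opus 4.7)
The plan is to apply the divergence theorem directly to $\int_{\Omega}\mathcal{L}_u P\,\mathrm{dv}$. Taking $\mathcal{L}_u$ to be the natural linearized $p$-Laplacian in divergence form
\[
\mathcal{L}_u v=\mathrm{div}\bigl(\vert\nabla u\vert^{p-2}\nabla v+(p-2)\vert\nabla u\vert^{p-4}\langle\nabla u,\nabla v\rangle\nabla u\bigr),
\]
the bulk integral becomes a boundary flux. Because $u\equiv 0$ on $\partial\Omega$ forces $\nabla u = u_\nu\nu$, and Hopf's lemma gives $u_\nu<0$ with $\vert\nabla u\vert$ bounded away from zero near $\partial\Omega$, the two divergence-form pieces combine cleanly into
\[
\int_{\Omega}\mathcal{L}_u P\,\mathrm{dv}=(p-1)\int_{\partial\Omega}\vert u_\nu\vert^{p-2}P_\nu\,\mathrm{ds},
\]
reducing matters to an explicit computation of $P_\nu$ along $\partial\Omega$.

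From $P=\frac{p-1}{p}\vert\nabla u\vert^p+\frac{u}{n}$ one gets $P_i=(p-1)\vert\nabla u\vert^{p-2}u_j u_{ij}+\frac{1}{n}u_i$; contracting with $\nu$ and using $\nabla u=u_\nu\nu$ yields $P_\nu=(p-1)\vert u_\nu\vert^{p-2}u_\nu u_{\nu\nu}+\frac{u_\nu}{n}$. To eliminate $u_{\nu\nu}$, I restrict the equation to $\partial\Omega$: the intrinsic decomposition $\Delta u=(n-1)Hu_\nu+u_{\nu\nu}$ (valid on any Riemannian manifold since the tangential Laplacian of the zero function vanishes) combined with $\Delta_p u=\vert\nabla u\vert^{p-2}\Delta u+(p-2)\vert\nabla u\vert^{p-4}u_iu_j u_{ij}$ and the PDE $-\Delta_p u=1$ produce
\[
(p-1)u_{\nu\nu}=-\vert u_\nu\vert^{2-p}+(n-1)H\vert u_\nu\vert.
\]
Substituting back gives $P_\nu=\frac{n-1}{n}\vert u_\nu\vert-(n-1)H\vert u_\nu\vert^p$, and the boundary integral splits into two pieces.

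The first piece matches $\vert\Omega\vert$ by a direct integration of the equation, $\vert\Omega\vert=-\int_\Omega\Delta_p u\,\mathrm{dv}=-\int_{\partial\Omega}\vert\nabla u\vert^{p-2}u_\nu\,\mathrm{ds}=\int_{\partial\Omega}\vert u_\nu\vert^{p-1}\,\mathrm{ds}$. Assembling the two pieces and dividing by $(p-1)(n-1)$ reproduces \eqref{Fundamental Identity}.

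The main obstacle is justifying the divergence theorem rigorously across the critical set $\{\nabla u=0\}$: $\mathcal{L}_u$ degenerates when $p>2$ and is singular when $1<p<2$, and weak solutions are generally only $C^{1,\alpha}$, so $\nabla^2 u$ may fail to exist there. I would handle this via an exhaustion on $\Omega_\epsilon:=\Omega\setminus\{\vert\nabla u\vert<\epsilon\}$, relying on the regularity statements proved earlier in the paper: namely that $u$ is $C^{2,\alpha}$ on the regular set, that the critical set has zero Lebesgue measure, and that $\vert\nabla u\vert>0$ in a tubular neighborhood of $\partial\Omega$ by Hopf. On each $\Omega_\epsilon$ the divergence theorem applies cleanly; passing $\epsilon\to 0$, one checks that the inner-boundary flux vanishes (using the zero-measure of the critical set together with the $L^1$-integrability of the flux density coming from the Bochner-type identity for $\mathcal{L}_u P$), while the outer boundary contribution stabilizes to the expression computed above.
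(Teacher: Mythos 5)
Your proposal is correct and follows essentially the same route as the paper: write $\mathcal{L}_u P$ in divergence form, apply the divergence theorem, compute $P_\nu$ on $\partial\Omega$ by eliminating $u_{\nu\nu}$ via $\Delta u=(n-1)Hu_\nu+u_{\nu\nu}$ and the PDE, and use $\int_{\partial\Omega}\vert u_\nu\vert^{p-1}\,{\rm ds}=\vert\Omega\vert$. The only difference is technical: where you propose an exhaustion $\Omega_\epsilon$ to handle the critical set, the paper instead extends the flux field by zero across $\mathcal{C}$ (using Stampacchia's lemma and the zero measure of $\mathcal{C}$) and invokes the Cuesta--Tak\'a\v{c} fine divergence theorem (Lemma \ref{divergence theorem}), which packages exactly the limiting argument you sketch.
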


\begin{remark}
When $M=\mathbb{R}^{n}$, this integral identity was proved by Magnanini and Poggesi {\rm \cite{MP3}}$($for $p=2$$)$ and Deng and Yin {\rm \cite{DY}}$($for $p\geq2$$)$.
\end{remark}

Another expression of the integral identity \eqref{Fundamental Identity} is as follows.
\begin{theorem}\label{Soap Bubble Theorem and the Heintze-Karcher inequality}
Under the conditions of Theorem {\rm \ref{Fundamental Identity Theorem}}. Then the following identity holds
\begin{eqnarray}\label{SB and HK}
\frac{n^{2}}{(p-1)(n-1)} \int_{\Omega} \mathcal{L}_{u} P {\rm dv}&+&\int_{\partial \Omega} \frac{1}{H}\left(1+nH\vert u_{\nu}\vert^{p-2} u_{\nu}\right)^{2} {\rm ds} \\
&&=\int_{\partial \Omega} \frac{1}{H}{\rm ds}-n\vert\Omega\vert.\nonumber
\end{eqnarray}
\end{theorem}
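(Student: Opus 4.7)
The plan is to derive \eqref{SB and HK} by algebraically combining the identity \eqref{Fundamental Identity} from Theorem \ref{Fundamental Identity Theorem} with the elementary divergence identity coming from the equation $-\Delta_p u = 1$. No new analytic machinery is required beyond what already establishes Theorem \ref{Fundamental Identity Theorem}.

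First, I would expand the square in the boundary integrand appearing in \eqref{SB and HK}:
\begin{equation*}
\int_{\partial\Omega}\frac{1}{H}\bigl(1+nH|u_\nu|^{p-2}u_\nu\bigr)^{2}\,{\rm ds}
=\int_{\partial\Omega}\frac{1}{H}\,{\rm ds}
+2n\int_{\partial\Omega}|u_\nu|^{p-2}u_\nu\,{\rm ds}
+n^{2}\int_{\partial\Omega}H\,|u_\nu|^{2p-2}\,{\rm ds}.
\end{equation*}
Thus the boundary term in the target identity splits into pieces that can be matched against \eqref{Fundamental Identity}.

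Second, I would use the divergence theorem applied to $-\Delta_p u=1$ on $\Omega$. Since $u\in W^{1,p}(\Omega)$ is a weak solution and $\partial\Omega$ is of class $C^{2,\alpha}$, the tangential derivatives of $u$ vanish on $\partial\Omega$, so $\nabla u = u_\nu\,\nu$ there. Hence
\begin{equation*}
|\Omega|=\int_{\Omega}1\,{\rm dv}=-\int_{\Omega}\Div(|\nabla u|^{p-2}\nabla u)\,{\rm dv}
=-\int_{\partial\Omega}|u_\nu|^{p-2}u_\nu\,{\rm ds}.
\end{equation*}
This gives $2n\int_{\partial\Omega}|u_\nu|^{p-2}u_\nu\,{\rm ds}=-2n|\Omega|$, accounting for the linear cross term in the expansion above.

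Third, I would multiply \eqref{Fundamental Identity} through by $n^{2}$ to obtain
\begin{equation*}
\frac{n^{2}}{(p-1)(n-1)}\int_{\Omega}\mathcal{L}_{u}P\,{\rm dv}
=n|\Omega|-n^{2}\int_{\partial\Omega}H\,|u_\nu|^{2p-2}\,{\rm ds},
\end{equation*}
and add this to the expansion of the squared boundary integrand. The $\pm n^{2}\int_{\partial\Omega}H|u_\nu|^{2p-2}\,{\rm ds}$ terms cancel, the linear cross term contributes $-2n|\Omega|$ by the divergence identity above, and combining with $n|\Omega|$ yields precisely $\int_{\partial\Omega}\frac{1}{H}\,{\rm ds}-n|\Omega|$, which is the right-hand side of \eqref{SB and HK}.

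The argument is essentially an algebraic manipulation, so there is no substantive obstacle; the only point that deserves care is the justification of the divergence theorem for $|\nabla u|^{p-2}\nabla u$ up to $\partial\Omega$, but this is covered by the $C^{2,\alpha}$ regularity of the boundary together with the boundary regularity of weak $p$-Laplacian solutions, which ensures $|\nabla u|^{p-2}\nabla u \cdot \nu$ is a well-defined integrable function on $\partial\Omega$.
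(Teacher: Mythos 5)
Your proposal is correct and follows essentially the same route as the paper: expand the square in the boundary integral, use $\int_{\partial\Omega}|u_\nu|^{p-2}u_\nu\,{\rm ds}=\int_{\Omega}\Delta_p u\,{\rm dv}=-|\Omega|$ (the paper's identity \eqref{9.1}) for the cross term, and substitute the Fundamental Identity \eqref{Fundamental Identity} scaled by $n^{2}$ for the quadratic term. The algebra and the justification via the divergence theorem match the paper's argument.
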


The Heintze-Karcher inequality dates back to 1987, and was obtained by Ros {\rm \cite{Ros}} using Reilly's Formula {\rm \cite{Re1}}. In 2014, Huang and Ruan {\rm \cite{HR}} used the divergence theorem to extend the Ros's result to manifolds with nonnegative Bakry-Emery Ricci curvature. Deng and Yin also obtained the Heintze-Karcher inequality in $\mathbb{R}^{n}$, see {\rm \cite{DY}}.

As an application of the integral identity \eqref{SB and HK}, we deduce the following Heintze-Karcher inequality.
\begin{corollary}\label{Heintze-Karcher inequality theorem}
Let $(M,g)$ be an $n$-dimensional complete noncompact Riemannian manifold with nonnegative Ricci curvature. Assume that $\Omega\subset M$ is a bounded and connected domain with boundary $\partial \Omega$ of class $C^{2,\alpha}$. If the mean curvature $H$ is positive on $\partial\Omega$, then
\begin{eqnarray}\label{Heintze-Karcher inequality}
\int_{\partial \Omega} \frac{1}{H}{\rm ds} \geq n\vert\Omega\vert
\end{eqnarray}
holds. In particular, the equality holds if and only if $\Omega$ is an Euclidean ball and $M$ is isometric to $\mathbb{R}^{n}$.
\end{corollary}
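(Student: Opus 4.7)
The plan is to derive the Heintze-Karcher inequality as a direct consequence of the identity \eqref{SB and HK}, reducing it to two non-negativity statements, and then to extract the rigidity by tracing back the equalities.

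To begin, I would fix a weak solution $u\in W^{1,p}(\Omega)$ of \eqref{*} and invoke Theorem \ref{Soap Bubble Theorem and the Heintze-Karcher inequality}. The identity \eqref{SB and HK} then reduces \eqref{Heintze-Karcher inequality} to the claim that its left-hand side is non-negative. The boundary integral $\int_{\partial\Omega}\frac{1}{H}(1+nH|u_\nu|^{p-2}u_\nu)^2\,{\rm ds}$ is manifestly non-negative, since $H>0$ on $\partial\Omega$ by hypothesis and the integrand is a square divided by $H$. The coefficient $\frac{n^2}{(p-1)(n-1)}$ is positive for $p>1$ and $n\geq 2$, so only the sign of $\int_\Omega \mathcal{L}_u P\,{\rm dv}$ remains. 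The operator $\mathcal{L}_u$ is the linearization of the $p$-Laplacian at $u$, and the construction behind Theorem \ref{Fundamental Identity Theorem} should already exhibit $\mathcal{L}_u P$ as a Bochner-type expression of the schematic form
\begin{equation*}
|\nabla u|^{p-2}\Bigl(\bigl|\nabla^2 u - \tfrac{\Delta u}{n}g\bigr|^2 + \mathrm{Ric}(\nabla u,\nabla u)\Bigr) + \text{non-negative correction},
\end{equation*}
in which the traceless Hessian square is non-negative and $\mathrm{Ric}(\nabla u,\nabla u)\geq 0$ by the curvature assumption. Integrating over $\Omega$ yields $\int_\Omega \mathcal{L}_u P\,{\rm dv}\geq 0$, and combined with the boundary non-negativity this proves \eqref{Heintze-Karcher inequality}.

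For the equality case, I would trace the preceding inequalities backward. If equality holds in \eqref{Heintze-Karcher inequality}, both left-hand terms of \eqref{SB and HK} must vanish. Vanishing of the boundary square gives the overdetermined condition $|u_\nu|^{p-2}u_\nu=-\frac{1}{nH}$ on $\partial\Omega$, while vanishing of the volume integral forces $\nabla^2 u=\frac{\Delta u}{n}g$ on the regular set of $u$ together with $\mathrm{Ric}(\nabla u,\nabla u)=0$ there. The Hessian identity is of Obata type: it implies $u$ is a quadratic function of the geodesic distance on a geodesic ball, so $\Omega$ must itself be a geodesic ball and $u$ coincides with the explicit Serrin-type solution. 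Propagating the flatness implied by the Hessian and Ricci conditions through $\Omega$ gives that $(\Omega,g)$ is isometric to a Euclidean ball, and finally the completeness and connectedness of $M$ together with the nonnegative Ricci bound should allow this isometry to be extended to all of $M$, yielding $M\cong\mathbb{R}^n$.

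The main obstacle I expect is the rigidity half. The inequality itself falls out cleanly from \eqref{SB and HK} once the Bochner-type identity for $\mathcal{L}_u P$ is in hand, which should already be embedded in the derivation of Theorems \ref{Fundamental Identity Theorem}--\ref{Soap Bubble Theorem and the Heintze-Karcher inequality}. Converting pointwise vanishing of the non-negative integrand into the global isometry $M\cong\mathbb{R}^n$ is more delicate: one must handle the degeneracy of the $p$-Laplacian at the (possibly non-empty) critical set of $u$, and then propagate the flatness information from $\Omega$ to the whole manifold using only completeness and the Ricci bound, a step that is not purely local and deserves special care.
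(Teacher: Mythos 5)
Your overall route is the same as the paper's: take the solution $u$ of \eqref{*}, plug into the identity \eqref{SB and HK}, observe that the boundary term $\int_{\partial\Omega}\frac{1}{H}(1+nH\vert u_\nu\vert^{p-2}u_\nu)^2\,{\rm ds}$ is non-negative because $H>0$, and reduce everything to $\int_\Omega\mathcal{L}_uP\,{\rm dv}\ge 0$. That is exactly how the paper argues, invoking its Lemma \ref{LuP>0 Lemma} for the sign of $\mathcal{L}_uP$ and the measure-zero property of the critical set (Lemma \ref{lemma Lou} together with Lou's regularity results) to make the integral legitimate; for the rigidity it simply refers to \cite{HR} and \cite{Ros}, so your sketchiness there is comparable to the paper's.

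The genuine gap is in the one step you wave at: the claim that $\mathcal{L}_uP$ has the schematic form $\vert\nabla u\vert^{p-2}\bigl(\vert\nabla^2u-\tfrac{\Delta u}{n}g\vert^2+\mathrm{Ric}(\nabla u,\nabla u)\bigr)+\text{non-negative correction}$. That is the $p=2$ Weinberger decomposition; it is not what the linearized $p$-Laplacian produces. The actual formula (Lemma \ref{LuP Lemma}) is
\begin{equation*}
\mathcal{L}_{u}P=(p-1)\vert\nabla u\vert^{2(p-2)}\Bigl(\|\nabla^{2}u\|^{2}+(p-2)^{2}A_{u}^{2}+{\rm Ric}(\nabla u,\nabla u)\Bigr)+2(p-1)(p-2)\vert\nabla u\vert^{2(p-2)}\vert\nabla\vert\nabla u\vert\vert^{2}-\tfrac{p-1}{n},
\end{equation*}
in which the term $2(p-1)(p-2)\vert\nabla u\vert^{2(p-2)}\vert\nabla\vert\nabla u\vert\vert^{2}$ is \emph{negative} for $1<p<2$ and the constant $-\tfrac{p-1}{n}$ must be absorbed by the Hessian. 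Showing non-negativity therefore requires the refined matrix inequality \eqref{MPL1} of Lemma \ref{More Precise Lemma} (a strengthening of Valtorta's estimate \eqref{refined estimate} carrying the extra $2\vert\nabla u\vert^{2(p-2)}\vert\nabla\vert\nabla u\vert\vert^{2}$ on the right), followed by the Cauchy--Schwarz bound $A_u^2\le\vert\nabla\vert\nabla u\vert\vert^2$; this is precisely the paper's main technical point and the reason the Deng--Yin argument was restricted to $p\ge 2$. A "traceless Hessian plus Ricci" decomposition with a manifestly non-negative remainder does not exist here, so as written your proof of $\int_\Omega\mathcal{L}_uP\,{\rm dv}\ge0$ does not go through; you need to either prove the inequality \eqref{MPL1} or cite Lemma \ref{LuP>0 Lemma} explicitly. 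Relatedly, your equality analysis should then be rerun with the correct decomposition: vanishing of the volume integral yields $\tfrac1n+(p-1)\vert\nabla u\vert^{p-2}A_u=0$ and equality in \eqref{MPL1} and \eqref{10.4}, not directly $\nabla^2u=\tfrac{\Delta u}{n}g$.
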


As another application of the integral identity \eqref{SB and HK}, we  prove  the following $p$-Laplacian overdetermined problem.
\begin{theorem}\label{overdetermined problem theorem}
Let $(M,g)$ be an $n$-dimensional complete noncompact Riemannian manifold with nonnegative Ricci curvature. Assume that $\Omega\subset M$ is a bounded and connected domain with boundary $\partial \Omega$ of class $C^{2,\alpha}$, and $H=H(x)$ is the mean curvature of $\partial\Omega$. Let $u\in W^{1,p}(\Omega)$ be a weak solution to the problem \eqref{*} for any $p>1$. Then $u_{\nu}$ on $\partial\Omega$ satisfies that $u_{\nu}\vert u_{\nu}\vert^{p-2}=-\frac{1}{nH}$ if and only if $\Omega$ is an Euclidean ball and $M$ is isometric to $\mathbb{R}^{n}$.
\end{theorem}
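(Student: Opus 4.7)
The plan is to deduce both implications from the two integral identities in Theorem \ref{Fundamental Identity Theorem} and Theorem \ref{Soap Bubble Theorem and the Heintze-Karcher inequality}, together with the Heintze-Karcher inequality of Corollary \ref{Heintze-Karcher inequality theorem}. For the forward direction, the overdetermined condition will be used to squeeze $\int_\Omega \mathcal{L}_u P\,{\rm dv}$ between zero and zero, collapsing the Heintze-Karcher inequality to equality; the reverse direction is a direct radial calculation.

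For the forward implication, assume $u_\nu|u_\nu|^{p-2} = -\tfrac{1}{nH}$ on $\partial\Omega$. Since $u$ is positive in $\Omega$ and vanishes on $\partial\Omega$, we have $u_\nu \leq 0$, which combined with the overdetermined condition forces $H>0$ on $\partial\Omega$ and $|u_\nu|^{p-1}=\tfrac{1}{nH}$. Plugging $1+nH|u_\nu|^{p-2}u_\nu=0$ into \eqref{SB and HK} annihilates the boundary square term and gives
\begin{equation*}
\frac{n^{2}}{(p-1)(n-1)}\int_{\Omega}\mathcal{L}_u P\,{\rm dv} \;=\; \int_{\partial\Omega}\frac{1}{H}\,{\rm ds} - n|\Omega| \;\geq\; 0,
\end{equation*}
by Corollary \ref{Heintze-Karcher inequality theorem}. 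On the other hand, $|u_\nu|^{2p-2}=\tfrac{1}{n^{2}H^{2}}$ inserted into \eqref{Fundamental Identity} yields
\begin{equation*}
\frac{1}{(p-1)(n-1)}\int_{\Omega}\mathcal{L}_u P\,{\rm dv} \;=\; \frac{1}{n}|\Omega| - \frac{1}{n^{2}}\int_{\partial\Omega}\frac{1}{H}\,{\rm ds} \;\leq\; 0,
\end{equation*}
again by Heintze-Karcher. Since $p>1$ and $n\geq 2$, the two displayed equalities together force $\int_{\Omega}\mathcal{L}_u P\,{\rm dv}=0$ and $\int_{\partial\Omega}\tfrac{1}{H}\,{\rm ds}=n|\Omega|$. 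We have thus reached the equality case of the Heintze-Karcher inequality, and the rigidity clause of Corollary \ref{Heintze-Karcher inequality theorem} concludes that $\Omega$ is a Euclidean ball and $M$ is isometric to $\mathbb{R}^{n}$.

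For the reverse direction, if $M=\mathbb{R}^{n}$ and $\Omega=B_R$, the radial solution of \eqref{*} satisfies $|u'|^{p-2}u'=-\tfrac{r}{n}$ after one integration with regularity at the origin, so $u_\nu=u'(R)=-(R/n)^{1/(p-1)}$ and $u_\nu|u_\nu|^{p-2}=-R/n$. As $H=1/R$ on $\partial B_R$, the relation $u_\nu|u_\nu|^{p-2}=-\tfrac{1}{nH}$ is immediate. The main obstacle, which is entirely packaged into the earlier results invoked above, is the rigorous derivation of the identities \eqref{Fundamental Identity} and \eqref{SB and HK} on a Riemannian manifold and the accompanying regularity theory for weak solutions of the $p$-Laplacian guaranteeing that $u_\nu$ has a well-defined pointwise value on $\partial\Omega$ so that the overdetermined condition is meaningful; once those preceding results are in place, the proof of Theorem \ref{overdetermined problem theorem} is just the short algebraic argument sketched above.
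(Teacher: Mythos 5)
Your proposal is correct and follows essentially the same route as the paper: both directions reduce to the integral identities \eqref{Fundamental Identity} and \eqref{SB and HK} and the rigidity clause of the Heintze--Karcher inequality in Corollary \ref{Heintze-Karcher inequality theorem}. The only cosmetic differences are that the paper computes $\int_{\Omega}\mathcal{L}_{u}P\,{\rm dv}=0$ directly from \eqref{9.1} rather than sandwiching it between two applications of Heintze--Karcher, and it obtains the reverse implication from the vanishing of the right-hand side of \eqref{SB and HK} instead of your explicit radial computation; both variants are valid.
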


The integral identity \eqref{Fundamental Identity} can also be rewritten to the following form.
\begin{theorem}\label{Soap Bubble Theorem}
Under the conditions of Theorem {\rm \ref{Fundamental Identity Theorem}}. Then the following identity holds
\begin{eqnarray}\label{SBT}
\frac{1}{(p-1)(n-1)}\int_{\Omega}\mathcal{L}_{u}P{\rm dv}&+&\frac{1}{n^{2}H_{0}}\int_{\partial\Omega}(nu_{\nu}\vert u_{\nu}\vert^{p-2}H_{0}+1)^{2}{\rm ds}\\
&&=\int_{\partial\Omega}(H_{0}-H)\vert u_{\nu}\vert^{2p-2}{\rm ds}.\nonumber
\end{eqnarray}
Here, $H_{0}=\frac{\vert\partial \Omega\vert}{n\vert\Omega\vert}$, and $\vert\partial\Omega\vert$ denotes the area of $\partial\Omega$.
\end{theorem}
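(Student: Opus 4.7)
The plan is to derive the identity \eqref{SBT} from the Fundamental Identity \eqref{Fundamental Identity} by an algebraic manipulation that completes a square using the constant $H_0 = \vert\partial\Omega\vert/(n\vert\Omega\vert)$. The only auxiliary input I will need is the Pohozaev-type identity
\begin{equation*}
\vert\Omega\vert + \int_{\partial\Omega} u_\nu\vert u_\nu\vert^{p-2}\,{\rm ds} = 0,
\end{equation*}
which I would obtain by integrating the equation $-\Delta_p u = 1$ over $\Omega$ and applying the divergence theorem, noting that $\vert\nabla u\vert = \vert u_\nu\vert$ on $\partial\Omega$ since $u$ vanishes there.

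The main step is then purely algebraic. Starting from \eqref{Fundamental Identity}, I would split $-H = (H_0 - H) - H_0$ inside the boundary integral to produce
\begin{equation*}
\frac{1}{(p-1)(n-1)}\int_{\Omega}\mathcal{L}_u P\,{\rm dv} - \int_{\partial\Omega}(H_0-H)\vert u_\nu\vert^{2p-2}\,{\rm ds} = \frac{1}{n}\vert\Omega\vert - H_0 \int_{\partial\Omega}\vert u_\nu\vert^{2p-2}\,{\rm ds}.
\end{equation*}
It then suffices to show that the right-hand side equals $-\frac{1}{n^2 H_0}\int_{\partial\Omega}(nu_\nu\vert u_\nu\vert^{p-2}H_0+1)^2\,{\rm ds}$. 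Expanding the square,
\begin{equation*}
(nu_\nu\vert u_\nu\vert^{p-2}H_0+1)^2 = n^2 H_0^2\vert u_\nu\vert^{2p-2} + 2nH_0\, u_\nu\vert u_\nu\vert^{p-2} + 1,
\end{equation*}
dividing by $n^2 H_0$ and integrating yields three contributions: $H_0\int_{\partial\Omega}\vert u_\nu\vert^{2p-2}\,{\rm ds}$, $\frac{2}{n}\int_{\partial\Omega} u_\nu\vert u_\nu\vert^{p-2}\,{\rm ds}$, and $\frac{\vert\partial\Omega\vert}{n^2 H_0}$. By the definition of $H_0$ the third term simplifies to $\vert\Omega\vert/n$, and by the Pohozaev identity the second equals $-2\vert\Omega\vert/n$. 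Collecting these contributions yields exactly the claimed expression, and \eqref{SBT} follows.

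Nothing in this argument is deep: once Theorem \ref{Fundamental Identity Theorem} is established, the heavy lifting is done. The only real obstacle is the careful bookkeeping of signs, since $u>0$ in $\Omega$ together with $u=0$ on $\partial\Omega$ forces $u_\nu\le 0$, so that $u_\nu\vert u_\nu\vert^{p-2} = -\vert u_\nu\vert^{p-1}$, and this identity must be used consistently to reconcile the two forms of the boundary integrand. I would also point out that the choice $H_0 = \vert\partial\Omega\vert/(n\vert\Omega\vert)$ is precisely what makes the constant term produced by expanding the square cancel against the $\vert\Omega\vert/n$ coming from the Pohozaev identity; any other choice would leave a residue and the completed-square form would fail.
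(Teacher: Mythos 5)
Your proposal is correct and follows essentially the same route as the paper: both arguments combine the Fundamental Identity \eqref{Fundamental Identity} with the divergence-theorem identity $\int_{\partial\Omega}u_\nu|u_\nu|^{p-2}\,{\rm ds}=-|\Omega|$ (the paper's \eqref{9.1}, which you call a Pohozaev-type identity) and then complete the square using the specific value $H_0=|\partial\Omega|/(n|\Omega|)$. The only difference is the order of the algebraic rearrangement, which is immaterial.
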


The celebrated Alexandrov's Soap Bubble Theorem states that if the mean curvature $H$ of a compact hypersurface $\Gamma$ embedded in $\mathbb{R}^{n}$ is constant, then $\Gamma$ is a sphere.
To prove this result, Alexandrov introduced his reflection principle, see \cite{A12}. With help of some properties of the solution of the PDE  $\Delta u=-1$, Reilly gave an alternative proof of Alexandrov's theorem in \cite{Re1}.
Recently, Magnanini and Poggesi proved Alexandrov's Soap Bubble Theorem and the stability of Serrin's results in \cite{MP3,MP2} by using some integral identities proved in \cite{WH} and refined in \cite{PS}.

As an application of the integral identity \eqref{SBT}, we  obtain the following Soap Bubble Theorem.
\begin{corollary}\label{overdetermined problem theorem 2}
Let $(M,g)$ be an $n$-dimensional complete noncompact Riemannian manifold with nonnegative Ricci curvature. Assume that $\Omega\subset M$ is a bounded and connected domain with boundary $\partial \Omega$ of class $C^{2,\alpha}$. If the mean curvature $H= H_{0}$ holds on $\partial\Omega$, where $H_{0}=\frac{\vert\partial \Omega\vert}{n\vert\Omega\vert}$, then $\Omega$ is an Euclidean ball and $M$ is isometric to $\mathbb{R}^{n}$.
\end{corollary}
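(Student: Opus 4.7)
The plan is to reduce the Soap Bubble hypothesis to the boundary condition that appears in Theorem \ref{overdetermined problem theorem} and then invoke that theorem. The entry point is the identity \eqref{SBT} from Theorem \ref{Soap Bubble Theorem}, which already holds in our Riemannian setting since the boundary is $C^{2,\alpha}$ and $u$ is the weak solution to \eqref{*}. By assumption $H \equiv H_{0}$ on $\partial\Omega$, so the right-hand side of \eqref{SBT} vanishes and we are left with
\begin{equation*}
\frac{1}{(p-1)(n-1)}\int_{\Omega}\mathcal{L}_{u}P\,{\rm dv}
+\frac{1}{n^{2}H_{0}}\int_{\partial\Omega}\bigl(nu_{\nu}|u_{\nu}|^{p-2}H_{0}+1\bigr)^{2}{\rm ds}=0.
\end{equation*}
Note $H_{0}=|\partial\Omega|/(n|\Omega|)>0$, so the boundary integrand is a nonnegative quantity with a positive prefactor.

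The crucial analytic input I would rely on is that, under nonnegative Ricci curvature, the integrand $\mathcal{L}_{u}P$ is pointwise nonnegative on $\Omega$. This is the Riemannian version of the Bochner/Cordes-type computation used in the proofs of Theorems \ref{Fundamental Identity Theorem}--\ref{Soap Bubble Theorem}: the linearized operator $\mathcal{L}_{u}$ acting on the $P$-function \eqref{P-function} produces a sum of squares (a Cauchy--Schwarz deficit of $|\nabla^{2}u|^{2}$ against $(\Delta u)^{2}/n$) plus a Ricci term $\mathrm{Ric}(\nabla u,\nabla u)|\nabla u|^{2p-4}$, both of which are nonnegative when $\mathrm{Ric}\geq 0$. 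Granted this, the two terms in the displayed equation are each individually nonnegative and hence each must vanish.

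From the boundary piece vanishing we extract, on the regular part of $\partial\Omega$,
\begin{equation*}
u_{\nu}|u_{\nu}|^{p-2}=-\frac{1}{nH_{0}}=-\frac{1}{nH},
\end{equation*}
which is precisely the overdetermined condition appearing in Theorem \ref{overdetermined problem theorem}. Applying that theorem directly yields that $\Omega$ is a Euclidean ball and that $M$ is isometric to $\mathbb{R}^{n}$, which is the desired conclusion.

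The step I expect to be the most delicate is not the algebra but the pointwise sign of $\mathcal{L}_{u}P$: one has to handle the degeneracy/singularity of the $p$-Laplacian at the critical set of $u$ so that the Bochner-type expression can actually be interpreted and integrated. This is already the technical heart of the preceding identities in the paper, so in the proof I would simply cite the nonnegativity statement established there together with the curvature hypothesis $\mathrm{Ric}\geq 0$, and then the rest of the argument is the short chain above.
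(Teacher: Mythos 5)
Your proposal is correct and follows essentially the same route as the paper: specialize the identity \eqref{SBT} to $H\equiv H_{0}$ so the right-hand side vanishes, invoke the nonnegativity $\mathcal{L}_{u}P\geq 0$ (the paper's Lemma \ref{LuP>0 Lemma}, valid off the measure-zero critical set) to force the boundary square term to vanish, and then conclude via Theorem \ref{overdetermined problem theorem}. The only difference is that you re-sketch the Bochner-type argument behind $\mathcal{L}_{u}P\geq 0$ rather than simply citing the lemma, which is what the paper does.
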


The key step in our proof is to show that the $P$-function satisfies the subharmonic property of the linearized $p$-Laplacian, namely, $\mathcal{L}_{u}P\geq0$, for $p>1$ in $\Omega\backslash\mathcal{C}$, where $\mathcal{C}:=\{x\in\Omega:\ \vert\nabla u(x)\vert=0\}$.
When we prove the estimate of $\mathcal{L}_{u}P\geq0$, we have to estimate the term of $\|\nabla^{2}u\|^{2}$, here $\|\nabla^{2}u\|$ denotes the Hilbert-Schmidt norm on matrices defined to be
\begin{eqnarray*}
\|\nabla^{2}u\|=(\sum \limits_{i,j} \vert u_{ij}\vert^{2})^{\frac{1}{2}}.
\end{eqnarray*}

Now let's briefly review the role of $\|\nabla^{2}u\|^{2}$. The following simple consequence of the Cauchy Schwartz inequality is well known,
\begin{eqnarray}\label{Cauchy Schwartz inequality}
\|\nabla^{2}{u}\|^{2} \geq \frac{(\Delta u)^{2}}{n}.
\end{eqnarray}
This inequality plays a crucial role in Relly's (\cite{Re1,Re2}) proof of Alexandrov's Theorem and Weinberger's (\cite{WH}) proof of Serrin's Theorem. In fact, when $u$ is a solution of \eqref{Serrin}, the equality in \eqref{Cauchy Schwartz inequality} holds if and only if $u$ is a quadratic polynomial $\omega$ of the form $\omega(x)=\frac{1}{2n}(\vert z-x\vert^{2})-a$, for some choice of $z\in \mathbb{R}^{n}$ and $a\in\mathbb{R}^{+}$. The boundary condition clearly implies that $\Omega$ must be a ball.

A more refined estimate on $\|\nabla^{2}{u}\|^{2}$ is the following
\begin{eqnarray*}
\|\nabla^{2}{u}\|^{2} \geq \frac{(\Delta u)^{2}}{n}+\frac{n}{n-1}\left(\frac{\Delta u}{n}-A_{u}\right)^{2},
\end{eqnarray*}
where $A_{u}=\vert\nabla u\vert^{-2}\nabla^{2}u(\nabla u, \nabla u)$. This estimate is the analogue of the curvature-dimension inequality and plays a key role in \cite{BQ} to prove the comparison with the one dimensional model.

In \cite{VD}, Valtorta generalized the above inequality to $p$-Laplacian and obtained the following estimate,
\begin{eqnarray}\label{refined estimate}
&&\vert\nabla u\vert^{2 (p-2)}\left(\|\nabla^{2}{u}\|^{2}+p(p-2) A_{u}^{2}\right) \\
&\geq& \frac{\left(\Delta_{p} u\right)^{2}}{n}+\frac{n}{n-1}\left(\frac{\Delta_{p} u}{n}-(p-1)\vert\nabla u\vert^{p-2} A_{u}\right)^{2}.\nonumber
\end{eqnarray}
He used the above inequality to prove a sharp estimate on the first nontrivial eigenvalue of the $p$-Laplacian on a compact Riemannian manifold with nonnegative Ricci curvature.

The inequality \eqref{refined estimate} is used by Deng and Yin \cite{DY} to prove $\mathcal{L}_{u}P\geq0$. However they obtain this results under the condition of $p\geq2$. In this paper, we want to remove the condition of $p\geq2$. We will modify the inequality \eqref{refined estimate} to the following new inequality:
\begin{eqnarray}\label{MPL1}
&&\vert\nabla u\vert^{2(p-2)}\left(\|\nabla^{2}{u}\|^{2}+(p^{2}-2p+2)A_{u}^{2}\right) \\
&\geq& \frac{\left(\Delta_{p} u\right)^{2}}{n}+\frac{n}{n-1}\left(\frac{\Delta_{p} u}{n}-(p-1)\vert\nabla u\vert^{p-2} A_{u}\right)^{2}
+2\vert\nabla u\vert^{2(p-2)}\vert\nabla \vert\nabla u\vert\vert^{2}. \nonumber
\end{eqnarray}
We compare the inequality \eqref{MPL1} with \eqref{refined estimate}. From the Cauchy Schwartz inequality $A^{2}_{u}\leq\vert\nabla\vert\nabla u\vert\vert^{2}$, the inequality \eqref{MPL1} implies \eqref{refined estimate}.

The paper is organized as follows. In Section \ref{Preliminaries}, we introduce some important quantities, some known results and some preliminary lemmas. In Section \ref{section3}, we prove the main results.

\section{Preliminaries}\label{Preliminaries}

\subsection{The $p$-Laplacian on non-critical level sets of $u$}

Let us denote by $H$ the mean curvature of $\partial \Omega$. To be more precise,
\begin{eqnarray}\label{6.1}
(n-1)H=\sum_{k=1}^{n-1}\langle\nabla_{e_{k}}\nu,e_{k}\rangle,
\end{eqnarray}
where $\nu$ is the unit outer normal vector field on $\partial \Omega$ and $\{e_{k}\}(k=1,2,...,n-1)$ are local orthonormal vector fields which are tangent to $\partial \Omega$ at points on the boundary.
Let $u$ be a solution of \eqref{*}, we denote by $\nu$ the following vector field
\begin{eqnarray*}
\nu=-\frac{\nabla u}{\vert\nabla u\vert},\ {\rm where}\ \nabla u\neq0,
\end{eqnarray*}
which coincides with the outward unit normal on $\partial\Omega$. Then, it is easy to see that the Laplacian of $u$ on $\partial \Omega$ can be expressed as follows
\begin{eqnarray}\label{6.2}
\Delta u=(n-1)Hu_{\nu}+u_{\nu\nu},
\end{eqnarray}
where $u_{\nu}=\nabla u\cdot\nu=-\vert\nabla u\vert$ and  $u_{\nu\nu}=\nabla^{2}u(\nu,\nu)$. Inasmuch as $\Delta_{p}u=-1$, we also get that
\begin{eqnarray}\label{6.3}
\vert u_{\nu}\vert^{p-2}\left[\Delta u+(p-2) u_{\nu\nu}\right]=-1.
\end{eqnarray}
which implies together with \eqref{6.2}
\begin{eqnarray}\label{6.4}
\vert u_{\nu}\vert^{p-2}\left[(p-1) u_{\nu\nu}+(n-1) H u_{\nu}\right]=-1 \ {\text on} \ \partial \Omega.
\end{eqnarray}

\subsection{Linearized $p$-Laplacian and some lemmas}

In this subsection we introduce the linearized operator of the $p$-Laplacian. For a function $\eta$ of class $C^{2}$, we define
\begin{eqnarray}\label{7.1}
\mathcal{L}_{u}\eta &\equiv &\frac{{\rm d}}{{\rm d}t}{\Big\vert}_{t=0} \Delta_{p}(u+t \eta) \\
&=& \operatorname{{\rm div}}\left((p-2)\vert\nabla u\vert^{p-4}\langle\nabla u,\nabla \eta\rangle \nabla u+\vert\nabla u\vert^{p-2} \nabla \eta\right) \nonumber \\
&=&(p-2) \frac{\langle\nabla u,\nabla \eta\rangle}{\vert\nabla u\vert^{2}}\Delta_{p}u+(p-2)\vert\nabla u\vert^{p-2}\left\langle\nabla u , \nabla \frac{\langle\nabla u,\nabla \eta\rangle}{\vert\nabla u\vert^{2}}\right\rangle \nonumber \\
&&+(p-2)\vert\nabla u\vert^{p-4} \nabla^{2}{u}(\nabla u, \nabla \eta)+\vert\nabla u\vert^{p-2} \Delta \eta \nonumber\\
&=&\vert\nabla u\vert^{p-2} \Delta \eta+(p-2)\vert\nabla u\vert^{p-4} \nabla^{2}{\eta}(\nabla u, \nabla u)+(p-2) \frac{\langle\nabla u,\nabla \eta\rangle}{\vert\nabla u\vert^{2}}\Delta_{p}u \nonumber \\
&&+2(p-2)\vert\nabla u\vert^{p-4} \nabla^{2}{u}\left(\nabla u, \nabla \eta-\frac{\nabla u}{\vert\nabla u\vert}\langle\frac{\nabla u}{\vert\nabla u\vert},\nabla \eta\rangle\right),\nonumber
\end{eqnarray}
where $\nabla u\neq0$.

Note that $\mathcal{L}_{u}u=(p-1)\Delta_{p}u$. In terms of a local orthonormal frame field $\{e_{1},e_{2},...,e_{n}\}$, we denote by $\mathcal{L}_{u}^{II}$ the second order part of $\mathcal{L}_{u}$, which is
\begin{eqnarray*}
\mathcal{L}^{II}_{u}\eta=\sum_{i,j=1}^{n}(\mathcal{L}^{II}_{u})_{ij}\nabla_{i}\nabla_{j}\eta
\end{eqnarray*}
where
\begin{eqnarray*}
\nabla_{i}=\nabla_{e_{i}},\ \ \ (\mathcal{L}^{II}_{u})_{ij}=\vert\nabla u\vert^{p-2}\delta_{ij}+(p-2)\vert\nabla u\vert^{p-4}\nabla_{i}u\nabla_{j}u,
\end{eqnarray*}
$\delta_{ij}=1$, if $i=j$, otherwise $\delta_{ij}=0$. Note that $\mathcal{L}^{II}_{u}u=\Delta_{p}u$. The principal symbol of this operator $\mathcal{L}_{u}$ is nonnegative everywhere and strictly positive around the points where $\nabla u\neq0$.

Now, we introduce some lemmas which will be used in our proof of the main results. In 2008, Lou \cite{Lou} proved the regularity results of weak solutions for $p$-Laplacian equations in Euclidean spaces. Here, we extend the regularity results to Riemannian manifolds.
\begin{lemma}\label{lemma Lou}
Let $(M,g)$ be an $n$-dimensional complete noncompact Riemannian manifold with nonnegative Ricci curvature. Assume that $\Omega\subset M$ is a bounded and connected domain with boundary $\partial \Omega$ of class $C^{2,\alpha}$. Let $u\in W^{1,p}(\Omega)$ be a local weak solution to the problem \eqref{*} for any $p>1$, then $\vert\nabla u\vert^{p-1}\in W_{\rm loc}^{1,2}(\Omega)$.
\end{lemma}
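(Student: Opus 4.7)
The approach is to adapt Lou's Euclidean regularization scheme (\cite{Lou}) to the Riemannian setting. Since the conclusion is purely local, it suffices to work in a geodesic ball $B=B_{r}(x_{0})\Subset\Omega$; the curvature hypothesis $\mathrm{Ric}\geq 0$ will enter only as a sign in a Bochner-type identity. The first step is to regularize: for each $\epsilon\in(0,1)$, let $u_{\epsilon}\in u+W_{0}^{1,p}(B)$ be the unique classical solution of
\begin{equation*}
-\operatorname{div}\Big((\vert\nabla u_{\epsilon}\vert^{2}+\epsilon)^{(p-2)/2}\nabla u_{\epsilon}\Big)=1 \ \ \text{in }B,\qquad u_{\epsilon}=u \ \ \text{on }\partial B.
\end{equation*}
Uniform ellipticity and Schauder theory give $u_{\epsilon}\in C^{2,\alpha}(\overline{B})\cap C^{\infty}(B)$; monotonicity in the Dirichlet energy gives $u_{\epsilon}\to u$ in $W^{1,p}(B)$; and the DiBenedetto--Tolksdorf--Lieberman $C^{1,\alpha}$-theory (which is local and passes to geodesic normal coordinates) yields uniform $C^{1,\alpha}_{\rm loc}(B)$-bounds and $C^{1}_{\rm loc}$-convergence $u_{\epsilon}\to u$.

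The central step is a Caccioppoli-type estimate. Differentiating the regularized equation in an orthonormal frame and testing against $\varphi^{2}(\vert\nabla u_{\epsilon}\vert^{2}+\epsilon)^{(p-2)/2}\nabla u_{\epsilon}$ for a cutoff $\varphi\in C_{c}^{\infty}(B)$, and commuting covariant derivatives via Bochner, I would produce the curvature term $\mathrm{Ric}(\nabla u_{\epsilon},\nabla u_{\epsilon})(\vert\nabla u_{\epsilon}\vert^{2}+\epsilon)^{p-2}$, which by hypothesis is nonnegative and can be discarded. The ellipticity of the operator $\mathcal{L}^{II}_{u_{\epsilon}}$, whose smallest eigenvalue is at least $\min\{1,p-1\}(\vert\nabla u_{\epsilon}\vert^{2}+\epsilon)^{(p-2)/2}$, then delivers the $\epsilon$-uniform bound
\begin{equation*}
\int_{B}\varphi^{2}(\vert\nabla u_{\epsilon}\vert^{2}+\epsilon)^{p-2}\|\nabla^{2}u_{\epsilon}\|^{2}\,\mathrm{dv}\leq C\big(\varphi,\|u\|_{W^{1,p}(B)},p\big).
\end{equation*}

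Finally, the pointwise Kato-type identity
\begin{equation*}
\big\vert\nabla(\vert\nabla u_{\epsilon}\vert^{2}+\epsilon)^{(p-1)/2}\big\vert^{2}\leq (p-1)^{2}(\vert\nabla u_{\epsilon}\vert^{2}+\epsilon)^{p-2}\|\nabla^{2}u_{\epsilon}\|^{2}
\end{equation*}
upgrades the Caccioppoli bound to a uniform $W^{1,2}$-bound on $(\vert\nabla u_{\epsilon}\vert^{2}+\epsilon)^{(p-1)/2}$ on every $B'\Subset B$; weak compactness together with the $C^{1}_{\rm loc}$-convergence $u_{\epsilon}\to u$ identifies the weak limit with $\vert\nabla u\vert^{p-1}$ and proves the claim. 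The main obstacle is the singular regime $1<p<2$: the weight $(\vert\nabla u_{\epsilon}\vert^{2}+\epsilon)^{p-2}$ blows up as $\epsilon\to0$ on the critical set $\mathcal{C}$, so the Caccioppoli algebra must be organized, exactly as in \cite{Lou}, so that no uncompensated negative power of $\vert\nabla u_{\epsilon}\vert$ survives. The nonnegativity of $\mathrm{Ric}$ is precisely what guarantees that the extra Riemannian Bochner term does not destroy this delicate balance.
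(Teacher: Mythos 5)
Your proposal follows essentially the same route as the paper: the same $\epsilon$-regularization $-\operatorname{div}((\epsilon^{2}+\vert\nabla u^{\epsilon}\vert^{2})^{(p-2)/2}\nabla u^{\epsilon})=1$, the same differentiated-equation/Bochner commutation producing a $\mathrm{Ric}(\nabla u^{\epsilon},\nabla u^{\epsilon})$ term discarded by the sign hypothesis, the same Caccioppoli bound $\int\eta^{2}(\epsilon^{2}+\vert\nabla u^{\epsilon}\vert^{2})^{p-2}\Vert\nabla^{2}u^{\epsilon}\Vert^{2}\,{\rm dv}\leq C$, and the same passage to the limit via the Kato-type inequality. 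The one step you defer to Lou --- organizing the quadratic form in $\nabla^{2}u^{\epsilon}$ so that the singular case $1<p<2$ closes --- is exactly the computation the paper carries out explicitly (its factor $Q\geq(p-1)^{2}$ is precisely the square of your $\min\{1,p-1\}$ eigenvalue bound), so your outline is correct and matches the paper's argument.
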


\begin{proof}
It suffices to prove that $\vert\nabla u\vert^{p-1}\in W_{\rm loc}^{1,2}(B)$ for any ball $B\subset\subset\Omega$. By a similar proof of Theorem 8.17 in \cite{GT}, one can prove that $u\in L_{\rm loc}^{\infty}(\Omega)$ by standard Moser iteration. In particular, $u\in L^{\infty}(\overline{B})$.

Let $\varepsilon\in(0,1)$, $u^{\varepsilon}\in W^{1,p}(B)$ be the unique solution to the following equation:
\begin{equation}\label{Lou1.2}
\begin{cases}
-{\rm div}\left((\varepsilon^{2}+\vert\nabla u^{\varepsilon}\vert^{2})^{\frac{p-2}{2}}\nabla u^{\varepsilon}\right)=1 & \text{in $B$},\\
u^{\varepsilon}=u   & \text{on $\partial B$}.
\end{cases}
\end{equation}
It is not hard to prove that $u^{\varepsilon}\in C^{\infty}(B)$,
\begin{equation}
u^{\varepsilon}\rightarrow u,\ \ {\rm strongly\ in}\ W^{1,p}(B),
\end{equation}
and
\begin{equation}
\int_{B}(\varepsilon^{2}+\vert\nabla u^{\varepsilon}\vert^{2})^{\frac{p}{2}}{\rm dv}\leq C,
\end{equation}
here and hereafter, $C>0$ denotes a constant independent of $\varepsilon$. By De Giorgi estimate, we can get
\begin{equation}
\|u^{\varepsilon}\|_{L_{\infty}}(B)\leq C.
\end{equation}

Now, let
\begin{equation}\label{Lou1.2.1}
G^{*}\equiv(\varepsilon^{2}+\vert\nabla u^{\varepsilon}\vert^{2})^{\frac{p-2}{2}}.
\end{equation}
Rewriting \eqref{Lou1.2} in a local orthonormal frame, we get that
\begin{equation}\label{Lou1.2.2}
\begin{cases}
-\sum\limits_{i=1}^{n}\left(G^{*}u^{\varepsilon}_{i}\right)_{i}=1& \text{in $B$},\\
u^{\varepsilon}=0   & \text{on $\partial B$}.
\end{cases}
\end{equation}
First  we claim that
\begin{eqnarray}\label{Lou1.7}
\sum\limits_{i,k=1}^{n}\left(G^{*}u^{\varepsilon}_{i}\right)_{ki}u_{k}^{\varepsilon}=\sum\limits_{i,k=1}^{n}\left(G^{*}u^{\varepsilon}_{i}\right)_{ik}u^{\varepsilon}_{k}+G^{*}{\rm Ric}(\nabla u^{\varepsilon}, \nabla u^{\varepsilon}),
\end{eqnarray}
where ${\rm Ric}$ denotes the Ricci curvature.

In fact, a straightforward computation shows that
\begin{eqnarray}\label{Lou1.7.1}
\left(G^{*}u^{\varepsilon}_{i}\right)_{ki}u^{\varepsilon}_{k}
=G^{*}_{ki}u^{\varepsilon}_{i}u^{\varepsilon}_{k}+G^{*}_{k}u^{\varepsilon}_{ii}u^{\varepsilon}_{k}+G^{*}_{i}u^{\varepsilon}_{ik}u^{\varepsilon}_{k}+G^{*}u^{\varepsilon}_{iki}u^{\varepsilon}_{k}.
\end{eqnarray}
Since
\begin{eqnarray}\label{Lou1.7.2}
\sum\limits_{i,k=1}^{n}u^{\varepsilon}_{iki}u^{\varepsilon}_{k}=\sum\limits_{i,k=1}^{n}u^{\varepsilon}_{iik}u^{\varepsilon}_{k}+{\rm Ric}(\nabla u^{\varepsilon}, \nabla u^{\varepsilon}).
\end{eqnarray}
Substituting \eqref{Lou1.7.2} into \eqref{Lou1.7.1}, we see that
\begin{eqnarray*}
&&\sum\limits_{i,k=1}^{n}\left(G^{*}u^{\varepsilon}_{i}\right)_{ki}u_{k}^{\varepsilon}\\
&=&\sum\limits_{i,k=1}^{n}\left( G^{*}_{ik}u^{\varepsilon}_{k}u^{\varepsilon}_{i}+G^{*}_{k}u^{\varepsilon}_{ii}u^{\varepsilon}_{k}+G^{*}_{i}u^{\varepsilon}_{ik}u^{\varepsilon}_{k}
+G^{*}u^{\varepsilon}_{iik}u^{\varepsilon}_{k} \right)+G^{*}{\rm Ric}(\nabla u^{\varepsilon}, \nabla u^{\varepsilon})\nonumber\\
&=&\sum\limits_{i,k=1}^{n}\left(G^{*}u^{\varepsilon}_{i}\right)_{ik}u^{\varepsilon}_{k}+G^{*}{\rm Ric}(\nabla u^{\varepsilon}, \nabla u^{\varepsilon}).\nonumber
\end{eqnarray*}
\qquad From \eqref{Lou1.2.2}, we know that
$\sum\limits_{i,k=1}^{n}\left(G^{*}u^{\varepsilon}_{i}\right)_{ik}u^{\varepsilon}_{k}=0$, then \eqref{Lou1.7} becomes
\begin{eqnarray}\label{Lou1.7.4}
\sum\limits_{i,k=1}^{n}\left(G^{*}u^{\varepsilon}_{i}\right)_{ki}u_{k}^{\varepsilon}=G^{*}{\rm Ric}(\nabla u^{\varepsilon}, \nabla u^{\varepsilon}).
\end{eqnarray}

For any domain $\Omega_{0}\subset\subset B$, let $\eta\in C_{c}^{\infty}(\Omega)$ be such that $0\leq \eta\leq1$ in $B$, and $\eta=1$ in $\Omega_{0}$. We note that the key step of Lou \cite{Lou} in proving the regularity results is to prove the integral inequality 2.14 in \cite[p525]{Lou}. Inspired by Lou's nice results, we obtain the following similar integral inequalities for Riemannian manifolds with nonnegative Ricci curvature:
\begin{eqnarray}\label{Lou1.14.3}
\int_{B}\eta^{2}(\varepsilon^{2}+\vert\nabla u^{\varepsilon}\vert^{2})^{p-2}\|\nabla^{2}u^{\varepsilon}\|^{2}{\rm dv}\leq C\int_{B}(\varepsilon^{2}+\vert\nabla u^{\varepsilon}\vert^{2})^{p-1}\vert\nabla\eta\vert^{2}{\rm dv}.
\end{eqnarray}

Now, we will prove the above inequality.  Multiply \eqref{Lou1.7.4} by
$\eta^{2}(\varepsilon^{2}+\vert\nabla u^{\varepsilon}\vert^{2})^{\frac{p-2}{2}}$, and integrate in $B$, then we get that
\begin{eqnarray}\label{Lou1.14}
&&-\int_{B}\eta^{2}(\varepsilon^{2}+\vert\nabla u^{\varepsilon}\vert^{2})^{p-2}{\rm Ric}(\nabla u^{\varepsilon}, \nabla u^{\varepsilon}){\rm dv} \\
&=&-\int_{B}\sum_{i,k=1}^{n}\eta^{2}(\varepsilon^{2}+\vert\nabla u^{\varepsilon}\vert^{2})^{\frac{p-2}{2}}\left(G^{*}u^{\varepsilon}_{i}\right)_{ki}u_{k}^{\varepsilon}
{\rm dv}\nonumber \\
&=&\int_{B}\sum_{i,k=1}^{n}\left(\eta^{2}(\varepsilon^{2}+\vert\nabla u^{\varepsilon}\vert^{2})^{\frac{p-2}{2}}u_{k}^{\varepsilon}\right)_{i}
\left((\varepsilon^{2}+\vert\nabla u^{\varepsilon}\vert^{2})^{\frac{p-2}{2}}u_{i}^{\varepsilon}\right)_{k}{\rm dv}\nonumber\\
&=&\int_{B}2(p-2)\eta(\varepsilon^{2}+\vert\nabla u^{\varepsilon}\vert^{2})^{p-3}\nabla^{2}u^{\varepsilon}(\nabla u^{\varepsilon}, \nabla u^{\varepsilon})\langle\nabla u^{\varepsilon},\nabla \eta\rangle{\rm dv}\nonumber\\
&&+\int_{B}\eta^{2}\left\langle\nabla\left((\varepsilon^{2}+\vert\nabla u^{\varepsilon}\vert^{2})^{\frac{p-2}{2}}\right), \nabla u^{\varepsilon}\right\rangle^{2}{\rm dv}\nonumber \\
&&+\int_{B}2(p-2)\eta^{2}(\varepsilon^{2}+\vert\nabla u^{\varepsilon}\vert^{2})^{p-3}\vert\nabla^{2}u^{\varepsilon}\nabla u^{\varepsilon}\vert^{2}{\rm dv}\nonumber \\
&&+\int_{B}2\eta(\varepsilon^{2}+\vert\nabla u^{\varepsilon}\vert^{2})^{p-2}\nabla^{2}u^{\varepsilon}(\nabla u^{\varepsilon},\nabla\eta){\rm dv}\nonumber \\
&&+\int_{B}\eta^{2}(\varepsilon^{2}+\vert\nabla u^{\varepsilon}\vert^{2})^{p-2}\|\nabla^{2}u^{\varepsilon}\|^{2}{\rm dv}\nonumber\\
&\equiv&{\rm I_{1}+I_{2}+I_{3}+I_{4}+I_{5}},\nonumber
\end{eqnarray}
where the second equality is obtained by using the divergence theorem.

Now, we divide the proof of inequality \eqref{Lou1.14.3} into three cases.

Case ${\rm I}$. When $p>2$. By the Cauchy Schwarz inequality and Young inequality, we deduce that
\begin{eqnarray}\label{Lou1.14.1}
{\rm I_{1}}&\geq&-\int_{B}2(p-2)\eta(\varepsilon^{2}+\vert\nabla u^{\varepsilon}\vert^{2})^{p-3}\vert\nabla^{2}u^{\varepsilon}\nabla u^{\varepsilon}\vert\vert\nabla u^{\varepsilon}\vert^{2}\vert\nabla\eta\vert{\rm dv}\\
&\geq&-\int_{B}(p-2)\eta^{2}(\varepsilon^{2}+\vert\nabla u^{\varepsilon}\vert^{2})^{p-3}\vert\nabla^{2}u^{\varepsilon}\nabla u^{\varepsilon}\vert^{2}{\rm dv}\nonumber\\
&&-\int_{B}(p-2)(\varepsilon^{2}+\vert\nabla u^{\varepsilon}\vert^{2})^{p-1}\vert\nabla\eta\vert^{2}{\rm dv},\nonumber
\end{eqnarray}
and
\begin{eqnarray}\label{Lou1.14.2}
{\rm I_{4}}
&\geq&-\int_{B}2\eta(\varepsilon^{2}+\vert\nabla u^{\varepsilon}\vert^{2})^{p-2}\vert\nabla^{2}u^{\varepsilon}\nabla u^{\varepsilon}\vert\vert\nabla\eta\vert{\rm dv}\\
&\geq&-\int_{B}(p-2)\eta^{2}(\varepsilon^{2}+\vert\nabla u^{\varepsilon}\vert^{2})^{p-3}\vert\nabla^{2}u^{\varepsilon}\nabla u^{\varepsilon}\vert^{2}{\rm dv}\nonumber\\
&&-\int_{B}\frac{1}{p-2}(\varepsilon^{2}+\vert\nabla u^{\varepsilon}\vert^{2})^{p-1}\vert\nabla\eta\vert^{2}{\rm dv}.\nonumber
\end{eqnarray}
Since the Ricci curvature is assumed to be nonnegative and $I_{2}\geq 0$. Then from \eqref{Lou1.14}, \eqref{Lou1.14.1} and \eqref{Lou1.14.2}, we know that the inequality \eqref{Lou1.14.3} holds.

Case ${\rm II}$. When $p=2$. The inequality \eqref{Lou1.14.3} is obvious holds.

Case ${\rm III}$. When $1<p<2$. Choose a local orthonormal frame $\{e_{i}\}$ near any such given point so that at the given point $\nabla u^{\varepsilon}=\vert \nabla u^{\varepsilon} \vert e_{1}$. Then
\begin{eqnarray}\label{Lou1.15.1}
{\rm I_{2}}&=&\int_{B}\eta^{2}\left\langle\nabla\left((\varepsilon^{2}+\vert\nabla u^{\varepsilon}\vert^{2})^{\frac{p-2}{2}}\right), \nabla u^{\varepsilon}\right\rangle^{2}{\rm dv}\\
&=&\int_{B}(p-2)^{2}\eta^{2}(\varepsilon^{2}+\vert\nabla u^{\varepsilon}\vert^{2})^{p-4}\left(\nabla^{2}u^{\varepsilon}(\nabla u^{\varepsilon}, \nabla u^{\varepsilon})\right)^{2}{\rm dv}\nonumber\\
&=&\int_{B}(p-2)^{2}\eta^{2}\left(\varepsilon^{2}+(u_{1}^{\varepsilon})^{2}\right)^{p-4}(u_{1}^{\varepsilon})^{4}(u_{11}^{\varepsilon})^{2}{\rm dv},\nonumber
\end{eqnarray}
and
\begin{eqnarray}\label{Lou1.15.2}
{\rm I_{3}}
&=&\int_{B}2(p-2)\eta^{2}\left(\varepsilon^{2}+(u_{1}^{\varepsilon})^{2}\right)^{p-3}(u_{1}^{\varepsilon})^{2}\sum\limits_{i=1}^{n}(u_{1i}^{\varepsilon})^{2}{\rm dv}\\
&=&\int_{B}2(p-2)\eta^{2}\left(\varepsilon^{2}+(u_{1}^{\varepsilon})^{2}\right)^{p-3}(u_{1}^{\varepsilon})^{2}
\left((u_{11}^{\varepsilon})^{2}+\sum\limits_{i=2}^{n}(u_{1i}^{\varepsilon})^{2}\right){\rm dv}\nonumber\\
&\geq&\int_{B}2(p-2)\eta^{2}\left(\varepsilon^{2}+(u_{1}^{\varepsilon})^{2}\right)^{p-3}(u_{1}^{\varepsilon})^{2}
(u_{11}^{\varepsilon})^{2}{\rm dv}\nonumber\\
&&+\int_{B}2(p-2)\eta^{2}\left(\varepsilon^{2}+(u_{1}^{\varepsilon})^{2}\right)^{p-2}
\sum\limits_{i=2}^{n}(u_{1i}^{\varepsilon})^{2}{\rm dv},\nonumber
\end{eqnarray}
and
\begin{eqnarray}\label{Lou1.15.3}
{\rm I_{5}}&=&\int_{B}\eta^{2}\left(\varepsilon^{2}+(u_{1}^{\varepsilon})^{2}\right)^{p-2}\sum\limits_{i,k=1}^{n}(u_{ik}^{\varepsilon})^{2}{\rm dv}\\
&=&\int_{B}\eta^{2}\left(\varepsilon^{2}+(u_{1}^{\varepsilon})^{2}\right)^{p-2}\left((u_{11}^{\varepsilon})^{2}+2\sum\limits_{i=1}^{n}(u_{1i}^{\varepsilon})^{2}+\sum\limits_{i,k=2}^{n}(u_{ik}^{\varepsilon})^{2}\right){\rm dv}.\nonumber
\end{eqnarray}
Combining \eqref{Lou1.15.1}, \eqref{Lou1.15.2} and \eqref{Lou1.15.3}, we deduce that
\begin{eqnarray}\label{Lou1.15.4}
&&{\rm I_{2}+I_{3}+I_{5}}\\
&\geq&\int_{B}{\rm Q}\eta^{2}\left(\varepsilon^{2}+(u_{1}^{\varepsilon})^{2}\right)^{p-2}(u_{11}^{\varepsilon})^{2}{\rm dv}\nonumber\\
&&+\int_{B}\eta^{2}\left(\varepsilon^{2}+(u_{1}^{\varepsilon})^{2}\right)^{p-2}\left(2(p-1)\sum\limits_{i=1}^{n}(u_{1i}^{\varepsilon})^{2}+\sum\limits_{i,k=2}^{n}(u_{ik}^{\varepsilon})^{2}\right){\rm dv},\nonumber
\end{eqnarray}
where
\begin{eqnarray}\label{Lou1.15.5}
{\rm Q}&=&(p-2)^{2}\frac{(u_{1}^{\varepsilon})^{4}}{\left(\varepsilon^{2}+(u_{1}^{\varepsilon})^{2}\right)^{2}}
+2(p-2)\frac{(u_{1}^{\varepsilon})^{2}}{\varepsilon^{2}+(u_{1}^{\varepsilon})^{2}}+1\\
&=&\left((p-2)\frac{(u_{1}^{\varepsilon})^{2}}{\varepsilon^{2}+(u_{1}^{\varepsilon})^{2}}+1\right)^{2}\nonumber\\
&=&\left(\frac{\varepsilon^{2}+(p-1)(u_{1}^{\varepsilon})^{2}}{\varepsilon^{2}+(u_{1}^{\varepsilon})^{2}}\right)^{2}\nonumber\\
&\geq&(p-1)^{2}.\nonumber
\end{eqnarray}
Substituting \eqref{Lou1.15.5} into \eqref{Lou1.15.4},  we obtain that
\begin{eqnarray}\label{Lou1.15.6}
&&{\rm I_{2}+I_{3}+I_{5}}\\
&\geq&\int_{B}\eta^{2}\left(\varepsilon^{2}+(u_{1}^{\varepsilon})^{2}\right)^{p-2}
\left((p-1)^{2}(u_{11}^{\varepsilon})^{2}+2(p-1)\sum\limits_{i=1}^{n}(u_{1i}^{\varepsilon})^{2}+\sum\limits_{i,k=2}^{n}(u_{ik}^{\varepsilon})^{2}\right){\rm dv}\nonumber\\
&\geq&(p-1)^{2}\int_{B}\eta^{2}\left(\varepsilon^{2}+(u_{1}^{\varepsilon})^{2}\right)^{p-2}
\left((u_{11}^{\varepsilon})^{2}+2\sum\limits_{i=1}^{n}(u_{1i}^{\varepsilon})^{2}+\sum\limits_{i,k=2}^{n}(u_{ik}^{\varepsilon})^{2}\right){\rm dv}\nonumber\\
&=&(p-1)^{2}\int_{B}\eta^{2}\left(\varepsilon^{2}+\vert\nabla u^{\varepsilon}\vert^{2}\right)^{p-2}\|\nabla^{2}u^{\varepsilon}\|^{2}{\rm dv}.\nonumber
\end{eqnarray}

Applying the Cauchy Schwarz inequality and Young inequality, we have that
\begin{eqnarray}\label{Lou1.16}
{\rm I_{1}}&\geq&-\int_{B}2\vert p-2\vert\eta(\varepsilon^{2}+\vert\nabla u^{\varepsilon}\vert^{2})^{p-3}\|\nabla^{2}u^{\varepsilon}\|\vert\nabla u^{\varepsilon}\vert^{3}\vert\nabla\eta\vert{\rm dv}\\
&\geq&-\int_{B}\alpha\eta^{2}(\varepsilon^{2}+\vert\nabla u^{\varepsilon}\vert^{2})^{p-2}\|\nabla^{2}u^{\varepsilon}\|^{2}{\rm dv}\nonumber\\
&&-\int_{B}\alpha^{-1}(p-2)^{2}(\varepsilon^{2}+\vert\nabla u^{\varepsilon}\vert^{2})^{p-1}\vert\nabla\eta\vert^{2}{\rm dv},\nonumber
\end{eqnarray}
and
\begin{eqnarray}\label{Lou1.17}
{\rm I_{4}}
&\geq&-\int_{B}2\eta(\varepsilon^{2}+\vert\nabla u^{\varepsilon}\vert^{2})^{p-2}\|\nabla^{2}u^{\varepsilon}\|\vert\nabla u^{\varepsilon}\vert\vert\nabla\eta\vert{\rm dv}\\
&\geq&-\int_{B}\beta\eta^{2}(\varepsilon^{2}+\vert\nabla u^{\varepsilon}\vert^{2})^{p-2}\|\nabla^{2}u^{\varepsilon}\|^{2}{\rm dv}\nonumber\\
&&-\int_{B}\beta^{-1}(\varepsilon^{2}+\vert\nabla u^{\varepsilon}\vert^{2})^{p-1}\vert\nabla\eta\vert^{2}{\rm dv},\nonumber
\end{eqnarray}
here $\alpha>0$ and $\beta>0$ are constant to be choose later. Since the Ricci curvature is assumed to be nonnegative, and combining \eqref{Lou1.14}, \eqref{Lou1.15.6}, \eqref{Lou1.16}, \eqref{Lou1.17}, and take $\alpha=\beta=\frac{(p-1)^{2}}{4}$, we know that the inequality \eqref{Lou1.14.3} holds.

The remaining part of the proof of this lemma is similar to Lou's proof of  \cite[Lemma 2.1]{Lou}.
\end{proof}

\begin{lemma}\label{LuP Lemma}
Let $(M,g)$ be an $n$-dimensional complete noncompact Riemannian manifold. Assume that $\Omega\subset M$ is a bounded and connected domain with boundary $\partial \Omega$ of class $C^{2,\alpha}$.  For any $u\in C^{2}(\Omega)$, then
\begin{eqnarray}\label{LuP1}
\mathcal{L}_{u}P&=&(p-1)\vert\nabla u\vert^{2(p-2)}\Big(\vert\nabla u\vert^{2-p}\langle\nabla\Delta_{p}u,\nabla u\rangle+\|\nabla^{2}u\|^{2}+(p-2)^{2}A_{u}^{2} \\
&&+{\rm Ric}(\nabla u,\nabla u)\Big)+2(p-1)(p-2)\vert\nabla u\vert^{2(p-2)}\vert\nabla\vert\nabla u\vert\vert^{2}+(p-1)\frac{\Delta_{p}u}{n},\nonumber
\end{eqnarray}
in $\Omega\backslash\mathcal{C}$, where $\mathcal{C}=\{x\in\Omega:\ \vert\nabla u(x)\vert=0\}$.
\end{lemma}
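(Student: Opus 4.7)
The plan is to carry out a direct pointwise computation by substituting $P=\frac{p-1}{p}\vert\nabla u\vert^{p}+\frac{u}{n}$ into the explicit formula \eqref{7.1} for $\mathcal{L}_{u}$. By linearity, $\mathcal{L}_{u}P=\frac{p-1}{p}\mathcal{L}_{u}(\vert\nabla u\vert^{p})+\frac{1}{n}\mathcal{L}_{u}u$, and the identity $\mathcal{L}_{u}u=(p-1)\Delta_{p}u$ (already recorded after \eqref{7.1}) immediately yields the last summand $(p-1)\Delta_{p}u/n$ of \eqref{LuP1}. The entire task therefore reduces to computing $\mathcal{L}_{u}(\vert\nabla u\vert^{p})$ on $\Omega\setminus\mathcal{C}$, where the coefficients of \eqref{7.1} are smooth and nondegenerate.

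The computation requires three calculational inputs. Writing $\vert\nabla u\vert^{p}=(\vert\nabla u\vert^{2})^{p/2}$ and using the chain rule produces $\nabla_{i}\vert\nabla u\vert^{p}=p\vert\nabla u\vert^{p-2}u_{ij}u_{j}$, and differentiating once more gives $\nabla_{k}\nabla_{i}\vert\nabla u\vert^{p}$ in terms of $u_{ij}$, $u_{ikj}$ and $\vert\nabla u\vert^{p-4}$ factors. Tracing over $i=k$ and invoking the Bochner identity
\[
\tfrac{1}{2}\Delta\vert\nabla u\vert^{2}=\|\nabla^{2}u\|^{2}+\langle\nabla u,\nabla\Delta u\rangle+{\rm Ric}(\nabla u,\nabla u)
\]
delivers $\Delta\vert\nabla u\vert^{p}$, which contains the Ricci term, the $\|\nabla^{2}u\|^{2}$ term, the scalar Laplacian contribution $\langle\nabla u,\nabla\Delta u\rangle$, and a further piece proportional to $\vert\nabla u\vert^{p-2}\vert\nabla\vert\nabla u\vert\vert^{2}$. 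Contracting $\nabla_{k}\nabla_{i}\vert\nabla u\vert^{p}$ with $u_{k}u_{i}$ instead produces $\nabla^{2}\vert\nabla u\vert^{p}(\nabla u,\nabla u)$, naturally bringing out $\vert\nabla u\vert^{p}A_{u}^{2}$ (from the square of $\nabla^{2}u(\nabla u,\nabla u)$) and again $\vert\nabla u\vert^{p}\vert\nabla\vert\nabla u\vert\vert^{2}$ (from the full norm of $\nabla^{2}u\cdot\nabla u$). Finally, the identity $\langle\nabla u,\nabla\vert\nabla u\vert^{p}\rangle=p\vert\nabla u\vert^{p}A_{u}$ is used wherever the third summand of \eqref{7.1} appears, and the pointwise relation $\Delta_{p}u=\vert\nabla u\vert^{p-2}[\Delta u+(p-2)A_{u}]$, after being differentiated and paired with $\nabla u$, converts $\langle\nabla u,\nabla\Delta u\rangle$ into the target form $\vert\nabla u\vert^{2-p}\langle\nabla\Delta_{p}u,\nabla u\rangle$ of \eqref{LuP1}.

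With these ingredients in hand, the four summands of \eqref{7.1} applied to $\eta=\vert\nabla u\vert^{p}$ are combined: the Laplacian summand provides the Bochner block; the $(p-2)\vert\nabla u\vert^{p-4}\nabla^{2}\vert\nabla u\vert^{p}(\nabla u,\nabla u)$ summand contributes the leading $p(p-2)^{2}A_{u}^{2}$ piece plus cross terms in $\vert\nabla\vert\nabla u\vert\vert^{2}$; the third, $\Delta_{p}u$-proportional, summand contributes an extra $A_{u}$-linear correction that absorbs into the converted $\langle\nabla\Delta_{p}u,\nabla u\rangle$ term; and the orthogonal-correction summand contributes further $\vert\nabla\vert\nabla u\vert\vert^{2}$ pieces. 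Multiplication by $\frac{p-1}{p}$ and addition of $(p-1)\Delta_{p}u/n$ then yields \eqref{LuP1}. The main obstacle is not conceptual but organizational: one must verify that the several $(p-2)$-weighted contributions arising from Bochner, from the Hessian-on-Hessian contraction, and from the orthogonal correction collapse precisely to the advertised coefficients $(p-2)^{2}$ of $A_{u}^{2}$ and $2(p-1)(p-2)$ of $\vert\nabla\vert\nabla u\vert\vert^{2}$. No curvature identity beyond Bochner's formula is required, so the proof is routine but error-prone bookkeeping.
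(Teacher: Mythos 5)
Your proposal is correct and structurally the same as the paper's proof: both exploit the linearity of $\mathcal{L}_{u}$ to reduce \eqref{LuP1} to a Bochner-type identity for $\vert\nabla u\vert^{p}$ plus the trivial contribution $\mathcal{L}_{u}u=(p-1)\Delta_{p}u$. The only real difference is that the paper isolates the second-order part $\mathcal{L}^{II}_{u}$, imports the $p$-Bochner formula \eqref{7.5} from Naber--Valtorta/Valtorta, and computes only the first-order corrections \eqref{LuP4}--\eqref{LuP5} by hand, whereas you rederive the content of \eqref{7.5} directly from the classical Bochner identity; your bookkeeping does close (the net coefficients collapse to $(p-2)^{2}$ for $A_{u}^{2}$ and $2(p-2)$ for $\vert\nabla\vert\nabla u\vert\vert^{2}$ exactly as in \eqref{LuP1}), so the plan is sound, merely longer.
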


\begin{proof} The main property enjoyed by the linearized $p$-Laplacian is the following version of the celebrated $p$-Bochner formula, also see \cite{NV,VD}.
\begin{eqnarray}\label{7.5}
\frac{1}{p} \mathcal{L}_{u}^{II}\left(\vert\nabla u\vert^{p}\right)&=&\vert\nabla u\vert^{2(p-2)}\Big(\vert\nabla u\vert^{2-p}\big(\langle\nabla \Delta_{p} u, \nabla u\rangle-(p-2) A_{u} \Delta_{p} u\big) \\
&&+\|\nabla^{2}{u}\|^{2}+p(p-2) A_{u}^{2}+{\rm Ric}(\nabla u, \nabla u)\Big).\nonumber
\end{eqnarray}
It follows from \eqref{P-function} and \eqref{7.1}, we have that
\begin{eqnarray}\label{LuP2}
\mathcal{L}_{u}P&=&\vert\nabla u\vert^{p-2} \Delta P+(p-2)\vert\nabla u\vert^{p-4} \nabla^{2}{P}(\nabla u, \nabla u)+(p-2) \frac{\langle\nabla u,\nabla P\rangle}{\vert\nabla u\vert^{2}}\Delta_{p}u \nonumber \\
&&+2(p-2)\vert\nabla u\vert^{p-4} \nabla^{2}{u}\left(\nabla u, \nabla P-\frac{\nabla u}{\vert\nabla u\vert}\left\langle\frac{\nabla u}{\vert\nabla u\vert},\nabla P\right\rangle\right)\nonumber \\
&=&\frac{p-1}{p}\mathcal{L}^{II}_{u}\vert\nabla u\vert^{p}+\frac{1}{n}\mathcal{L}_{u}^{II}u +(p-2) \frac{\langle\nabla u,\nabla P\rangle}{\vert\nabla u\vert^{2}}\Delta_{p}u \\
&&+2(p-2)\vert\nabla u\vert^{p-4} \nabla^{2}{u}\left(\nabla u, \nabla P-\frac{\nabla u}{\vert\nabla u\vert}\left\langle\frac{\nabla u}{\vert\nabla u\vert},\nabla P\right\rangle\right).\nonumber
\end{eqnarray}
Since
\begin{eqnarray}\label{LuP4}
\frac{\langle\nabla u,\nabla P\rangle}{\vert\nabla u\vert^{2}}\Delta_{p}u=(p-1)\vert\nabla u\vert^{p-2}A_{u}\Delta_{p}u+\frac{1}{n}\Delta_{p}u,
\end{eqnarray}
and
\begin{eqnarray}\label{LuP5}
&&\vert\nabla u\vert^{p-4} \nabla^{2}{u}\left(\nabla u, \nabla P-\frac{\nabla u}{\vert\nabla u\vert}\left\langle\frac{\nabla u}{\vert\nabla u\vert},\nabla P\right\rangle\right) \\
&=&(p-1)\vert\nabla u\vert^{2(p-2)}\vert\nabla \vert\nabla u\vert\vert^{2}-(p-1)\vert\nabla u\vert^{2(p-2)}A_{u}^{2}.\nonumber
\end{eqnarray}
Substituting \eqref{7.5}, \eqref{LuP4} and \eqref{LuP5} into \eqref{LuP2}, these complete the proof.
\end{proof}

The main idea of the proof of the following Lemma \ref{More Precise Lemma} comes from \cite{VD}, see Valtorta's Lemma 3.2.

\begin{lemma}\label{More Precise Lemma}
Let $(M,g)$ be an $n$-dimensional complete  Riemannian manifold. Assume that $\Omega\subset M$ is a bounded and connected domain with boundary $\partial \Omega$ of class $C^{2,\alpha}$.  For any $u\in C^{2}(\Omega)$, then \eqref{MPL1} holds in $\Omega\backslash\mathcal{C}$, where $\mathcal{C}=\{x\in\Omega:\ \vert\nabla u(x)\vert=0\}$.
\end{lemma}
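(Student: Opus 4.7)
The plan is to prove \eqref{MPL1} pointwise at every $x\in\Omega\setminus\mathcal{C}$ by a direct coordinate computation, in the spirit of Valtorta's Lemma 3.2 in \cite{VD}, and to reduce the entire estimate to a single application of the Cauchy--Schwarz inequality on an $(n-1)\times(n-1)$ submatrix of the Hessian. Since the stated inequality involves no curvature term, this is really an algebraic identity for $C^{2}$ functions, and no Ricci hypothesis is needed at this step.

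First I would fix $x$ with $\nabla u(x)\neq 0$ and pick a local orthonormal frame $\{e_{1},\dots,e_{n}\}$ with $e_{1}=\nabla u(x)/\vert\nabla u(x)\vert$. At $x$ this gives $u_{1}=\vert\nabla u\vert$ and $u_{i}=0$ for $i\geq 2$, from which the identifications
\begin{equation*}
A_{u}=u_{11},\qquad \vert\nabla\vert\nabla u\vert\vert^{2}=u_{11}^{2}+\sum_{i\geq 2}u_{1i}^{2},\qquad \Delta_{p}u=\vert\nabla u\vert^{p-2}\bigl((p-1)u_{11}+B\bigr)
\end{equation*}
fall out immediately, with $B:=\sum_{i\geq 2}u_{ii}$, together with $\|\nabla^{2}u\|^{2}=u_{11}^{2}+2\sum_{i\geq 2}u_{1i}^{2}+\sum_{i,j\geq 2}u_{ij}^{2}$. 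Plugging these into \eqref{MPL1} and dividing by $\vert\nabla u\vert^{2(p-2)}$, the terms $2\sum_{i\geq 2}u_{1i}^{2}$ on the two sides cancel \emph{exactly}: this is precisely the role of the correction $2\vert\nabla u\vert^{2(p-2)}\vert\nabla\vert\nabla u\vert\vert^{2}$ on the right of \eqref{MPL1} and of the bump $p(p-2)\mapsto p^{2}-2p+2=p(p-2)+2$ on the left, which cleanly cancel the mixed Hessian components rather than merely absorbing them as in Valtorta's \eqref{refined estimate}. Setting $\alpha:=(p-1)u_{11}$, the inequality is then equivalent to
\begin{equation*}
\alpha^{2}+\sum_{i,j\geq 2}u_{ij}^{2}\;\geq\;\frac{(\alpha+B)^{2}}{n}+\frac{\bigl(B-(n-1)\alpha\bigr)^{2}}{n(n-1)}.
\end{equation*}

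Second, I would simplify the right-hand side. Expanding over the common denominator $n(n-1)$ gives the numerator $(n-1)(\alpha+B)^{2}+\bigl(B-(n-1)\alpha\bigr)^{2}$; the cross terms in $\alpha B$ cancel by design, and one is left with $n\bigl((n-1)\alpha^{2}+B^{2}\bigr)$, so the right-hand side collapses to $\alpha^{2}+B^{2}/(n-1)$. Cancelling $\alpha^{2}$ with the left-hand side, the entire claim reduces to
\begin{equation*}
\sum_{i,j\geq 2}u_{ij}^{2}\;\geq\;\frac{B^{2}}{n-1},
\end{equation*}
which is immediate by Cauchy--Schwarz on the $(n-1)\times(n-1)$ symmetric block $(u_{ij})_{i,j\geq 2}$: discarding off-diagonal entries and using $(\sum_{i\geq 2}u_{ii})^{2}\leq(n-1)\sum_{i\geq 2}u_{ii}^{2}$ gives the bound.

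I do not foresee a real obstacle: once the frame is adapted to $\nabla u$, everything is linear algebra and polynomial identities. The only delicate point is the bookkeeping in the reduction step, but the cancellation of the $u_{1i}$ ($i\geq 2$) contributions is \emph{forced} by the choice of coefficient $p^{2}-2p+2$ and of the extra term $2\vert\nabla u\vert^{2(p-2)}\vert\nabla\vert\nabla u\vert\vert^{2}$, so once those are in place there is essentially nothing left to do beyond the one-line Cauchy--Schwarz above.
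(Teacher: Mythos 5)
Your proposal is correct and follows essentially the same route as the paper: adapt an orthonormal frame so that $e_{1}=\nabla u/\vert\nabla u\vert$, identify $A_{u}=u_{11}$, $\vert\nabla\vert\nabla u\vert\vert^{2}=\sum_{j}u_{1j}^{2}$ and $\vert\nabla u\vert^{2-p}\Delta_{p}u=(p-1)u_{11}+\sum_{i\geq 2}u_{ii}$, observe that the mixed terms $u_{1j}^{2}$ and the $u_{11}^{2}$ contributions match on both sides, and reduce everything to the Cauchy--Schwarz bound $\bigl(\sum_{i\geq 2}u_{ii}\bigr)^{2}\leq (n-1)\sum_{i,j\geq 2}u_{ij}^{2}$. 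The paper organizes this as an inequality for the left-hand side followed by an exact evaluation of the right-hand side, whereas you subtract and simplify first, but the computation is the same.
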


\begin{proof} Now, choose a local orthonormal frame $\{e_{i}\}$ near any such given point so that at the given point $\nabla u^{\varepsilon}=\vert \nabla u^{\varepsilon} \vert e_{1}$. Then
\begin{equation*}
\vert\nabla u\vert^{2-p} \Delta_{p}u=\Delta u+(p-2)A_{u}=(p-1) u_{11}+\sum_{i=2}^{n} u_{i i}.
\end{equation*}
By the Cauchy Schwarz inequality, we get that
\begin{eqnarray}\label{MPL3}
&&\|\nabla^{2}{u}\|^{2}+(p^{2}-2p+2)A_{u}^{2}\\
&=&(p^{2}-2p+3)u_{11}^{2}+2 \sum_{j=2}^{n} u_{1 j}^{2}+\sum_{i, j=2}^{n} u_{i j}^{2} \nonumber \\
&\geq&(p-1)^{2} u_{11}^{2}+\frac{1}{n-1}\left(\sum_{i=2}^{n} u_{i i}\right)^{2}+2 \sum_{j=1}^{n} u_{1 j}^{2}. \nonumber
\end{eqnarray}
On the other hand it is easily seen that
\begin{eqnarray}\label{MPL4}
&&\frac{\left(\vert\nabla u\vert^{2-p} \Delta_{p}u\right)^{2}}{n}+\frac{n}{n-1}\left(\frac{\vert\nabla u\vert^{2-p} \Delta_{p}u}{n}-(p-1) A_{u}\right)^{2}+2\vert\nabla\vert\nabla u\vert\vert^{2} \\
&=&\frac{1}{n}\left((p-1) u_{11}+\sum_{i=2}^{n} u_{i i}\right)^{2}+\frac{n}{n-1}\left(-\frac{n-1}{n}(p-1) u_{11}+\frac{1}{n} \sum_{i=2}^{n} u_{i i}\right)^{2}\nonumber \\
&&+2\sum_{j=1}^{n} u_{1 j}^{2} \nonumber \\
&=&(p-1)^{2} u_{11}^{2}+\frac{1}{n-1}\left(\sum_{i=2}^{n} u_{i i}\right)^{2}+2\sum_{j=1}^{n} u_{1 j}^{2}.\nonumber
\end{eqnarray}
These completes the proof.
\end{proof}

Next, we  prove that $P$-function satisfies subharmonic type property of the linearized operator $\mathcal{L}_{u}$.
\begin{lemma}\label{LuP>0 Lemma}
Let $(M,g)$ be an $n$-dimensional complete noncompact Riemannian manifold with nonnegative Ricci curvature. Assume that $\Omega\subset M$ is a bounded and connected domain with boundary $\partial \Omega$ of class $C^{2,\alpha}$. Let $u\in W^{1,p}(\Omega)$ be a weak solution to the problem \eqref{*} for any $p>1$. Then
\begin{eqnarray}\label{10.0}
\mathcal{L}_{u}P\geq0,
\end{eqnarray}
in $\Omega\backslash\mathcal{C}$, where $\mathcal{C}=\{x\in\Omega:\ \vert\nabla u(x)\vert=0\}$.
\end{lemma}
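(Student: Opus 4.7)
The plan is to combine four ingredients already assembled in Section~\ref{Preliminaries}: the identity \eqref{LuP1} from Lemma~\ref{LuP Lemma}, the refined Hessian inequality \eqref{MPL1} from Lemma~\ref{More Precise Lemma}, the pointwise Cauchy--Schwarz bound $A_u^{2}\leq\vert\nabla\vert\nabla u\vert\vert^{2}$, and the hypothesis ${\rm Ric}\geq 0$. The equation $\Delta_p u=-1$ will force the leftover scalar constants to cancel exactly, and the assumption $p>1$ will give every coefficient the correct sign.

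Since $\vert\nabla u\vert\neq 0$ on $\Omega\setminus\mathcal{C}$, the $p$-Laplacian is locally uniformly elliptic there, so classical regularity promotes the weak solution $u$ to $C^{2,\alpha}_{\rm loc}(\Omega\setminus\mathcal{C})$, and Lemmas~\ref{LuP Lemma} and \ref{More Precise Lemma} can be applied pointwise. Substituting $\Delta_p u=-1$, which in particular kills $\langle\nabla\Delta_p u,\nabla u\rangle$, into \eqref{LuP1} yields
\begin{align*}
\mathcal{L}_{u}P&=(p-1)\vert\nabla u\vert^{2(p-2)}\Bigl(\|\nabla^{2}u\|^{2}+(p-2)^{2}A_{u}^{2}+{\rm Ric}(\nabla u,\nabla u)\Bigr)\\
&\quad+2(p-1)(p-2)\vert\nabla u\vert^{2(p-2)}\vert\nabla\vert\nabla u\vert\vert^{2}-\frac{p-1}{n}.
\end{align*}

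Next I would solve \eqref{MPL1} for $\vert\nabla u\vert^{2(p-2)}\|\nabla^{2}u\|^{2}$ and insert it into the display above. The elementary identities $(p-2)^{2}-(p^{2}-2p+2)=-2(p-1)$ and $2(p-1)+2(p-1)(p-2)=2(p-1)^{2}$ cause the $A_u^{2}$- and $\vert\nabla\vert\nabla u\vert\vert^{2}$-coefficients to assemble into the single combination $\vert\nabla\vert\nabla u\vert\vert^{2}-A_u^{2}$, while $(\Delta_p u)^{2}/n=1/n$ combined with $(p-1)\cdot\tfrac{1}{n}=\tfrac{p-1}{n}$ cancels the isolated constant. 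What survives is
\begin{align*}
\mathcal{L}_{u}P\geq{}&(p-1)\frac{n}{n-1}\Bigl(\tfrac{\Delta_p u}{n}-(p-1)\vert\nabla u\vert^{p-2}A_u\Bigr)^{2}\\
&+2(p-1)^{2}\vert\nabla u\vert^{2(p-2)}\bigl(\vert\nabla\vert\nabla u\vert\vert^{2}-A_{u}^{2}\bigr)+(p-1)\vert\nabla u\vert^{2(p-2)}{\rm Ric}(\nabla u,\nabla u),
\end{align*}
and each of the three summands is manifestly nonnegative: the first is a genuine square, the second uses the Cauchy--Schwarz inequality $(u_{ij}u^{i}u^{j})^{2}\leq\vert\nabla u\vert^{2}\sum_i(u_{ij}u^{j})^{2}$ (which is exactly $A_u^{2}\leq\vert\nabla\vert\nabla u\vert\vert^{2}$), and the third uses the Ricci hypothesis.

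The principal hurdle is the coefficient bookkeeping in the step above: one must verify that the residual $A_u^{2}$- and $\vert\nabla\vert\nabla u\vert\vert^{2}$-coefficients produced by combining \eqref{LuP1} with \eqref{MPL1} have equal absolute value $2(p-1)^{2}$, so that the Cauchy--Schwarz inequality closes the gap, and that the scalar $-(p-1)/n$ is exactly absorbed by $(p-1)(\Delta_p u)^{2}/n$. Once this is checked, the case $1<p<2$ requires no separate treatment, because $\vert\nabla u\vert^{2(p-2)}$ is locally finite on $\Omega\setminus\mathcal{C}$; this is the key advantage of \eqref{MPL1} over the weaker inequality \eqref{refined estimate} used by Deng and Yin.
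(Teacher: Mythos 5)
Your proposal is correct and follows essentially the same route as the paper's own proof: substitute $\Delta_p u=-1$ into \eqref{LuP1}, use \eqref{MPL1} to replace $\vert\nabla u\vert^{2(p-2)}\|\nabla^{2}u\|^{2}$ (your coefficient bookkeeping, $(p-2)^{2}-(p^{2}-2p+2)=-2(p-1)$ and $2(p-1)+2(p-1)(p-2)=2(p-1)^{2}$, checks out, as does the cancellation of $(p-1)/n$ against $(p-1)(\Delta_p u)^{2}/n$), and close with $A_u^{2}\leq\vert\nabla\vert\nabla u\vert\vert^{2}$ and ${\rm Ric}\geq 0$. The only cosmetic difference is that the paper drops the Ricci term immediately after invoking its nonnegativity, whereas you carry it to the final display.
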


\begin{proof} According to the standard elliptic regularity theory, we know that $u\in C^{3}(\Omega\backslash \mathcal{C})$. From \eqref{*} and Lemma \ref{LuP Lemma}, we see that
\begin{eqnarray}\label{10.1}
\mathcal{L}_{u}P&=&(p-1)\vert\nabla u\vert^{2(p-2)}\Big(\|\nabla^{2}u\|^{2}+(p-2)^{2}A_{u}^{2}+{\rm Ric}(\nabla u,\nabla u)\Big) \\
&&+2(p-1)(p-2)\vert\nabla u\vert^{2(p-2)}\vert\nabla\vert\nabla u\vert\vert^{2}-\frac{p-1}{n}\nonumber \\
&\geq&(p-1)\vert\nabla u\vert^{2(p-2)}\Big(\|\nabla^{2}u\|^{2}+(p^{2}-2p+2)A^{2}_{u}\Big)-2(p-1)^{2}\vert\nabla u\vert^{2(p-2)}A^{2}_{u}\nonumber \\
&&+2(p-1)(p-2)\vert\nabla u\vert^{2(p-2)}\vert\nabla\vert\nabla u\vert\vert^{2}-\frac{p-1}{n}.\nonumber
\end{eqnarray}
By Lemma \ref{More Precise Lemma}, we obtain that
\begin{eqnarray}\label{10.3}
\mathcal{L}_{u}P&\geq& \frac{n(p-1)}{n-1}\left(\frac{1}{n}+(p-1)\vert\nabla u\vert^{p-2} A_{u}\right)^{2}\\
&&+2(p-1)^{2}\vert\nabla u\vert^{2(p-2)}(\vert\nabla\vert\nabla u\vert\vert^{2}-A^{2}_{u}).\nonumber
\end{eqnarray}
From the Cauchy Schwartz inequality, we know that
\begin{eqnarray}\label{10.4}
A_{u}^{2}=\left(\frac{\nabla^{2}u(\nabla u,\nabla u)}{\vert\nabla u\vert^{2}}\right)^{2}\leq\vert\nabla \vert\nabla u\vert\vert^{2}.
\end{eqnarray}
Substituting \eqref{10.4} into \eqref{10.3}, these complete the proof.
\end{proof}

Finally, we state a very fine version of the divergence theorem. This Lemma will be used in the proof of Theorem \ref{Fundamental Identity Theorem}.
\begin{lemma}\label{divergence theorem}
Let $\Omega\subset M$ be a bounded domain with a $C^{2}$-boundary $\partial \Omega$. Assume that $\mathbf{a}: \overline{\Omega}\rightarrow \mathbb{R}^{n}$ satisfies $\mathbf{a}\in[C^{0}(\overline{\Omega})]^{n}$ and ${\rm{div}}\mathbf{a}=f\in L^{1}(\Omega)$ in the sense of distributions in $\Omega$. Then we have
\begin{eqnarray}
\int_{\partial\Omega}\mathbf{a}\cdot\nu {\rm ds}=\int_{\Omega}f(x){\rm dv}.
\end{eqnarray}
\end{lemma}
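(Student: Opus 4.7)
The plan is to test the distributional identity $\mathrm{div}\,\mathbf{a}=f$ against a family of smooth interior cut-offs built from the signed distance function to $\partial\Omega$, then pass to the limit using dominated convergence and the coarea formula, thereby avoiding any mollification of $\mathbf{a}$ (which would be delicate on a manifold).

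Since $\partial\Omega$ is of class $C^{2}$, there is a one-sided tubular neighborhood $U\subset\Omega$ of $\partial\Omega$ on which $d(x):=\dist(x,\partial\Omega)$ is $C^{2}$ with $\vert\nabla d\vert\equiv 1$; setting $\nu:=-\nabla d$ on $U$ extends the outward unit normal into $U$, and for small $t>0$ the level set $\Sigma_{t}:=\{d=t\}$ is a $C^{2}$ hypersurface with outward unit normal $\nu$, smoothly converging to $\partial\Omega$ as $t\to 0^{+}$. Pick a smooth cutoff $\chi\colon[0,\infty)\to[0,1]$ with $\chi\equiv 0$ on $[0,\tfrac{1}{2}]$ and $\chi\equiv 1$ on $[1,\infty)$, and for $\epsilon>0$ small define $\varphi_{\epsilon}(x):=\chi(d(x)/\epsilon)$ on $U$, extended by $1$ to $\Omega\setminus U$. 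Then $\varphi_{\epsilon}\in C^{2}_{c}(\Omega)$, $\varphi_{\epsilon}\to 1$ pointwise in $\Omega$, and $\nabla\varphi_{\epsilon}=-\epsilon^{-1}\chi'(d/\epsilon)\,\nu$.

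Testing $\mathrm{div}\,\mathbf{a}=f$ against $\varphi_{\epsilon}$ and invoking the coarea formula (valid since $\vert\nabla d\vert=1$) yields
\begin{equation*}
\int_{\Omega}f\,\varphi_{\epsilon}\,\mathrm{dv}=-\int_{\Omega}\mathbf{a}\cdot\nabla\varphi_{\epsilon}\,\mathrm{dv}=\int_{0}^{\infty}\chi'(\tau)\left(\int_{\Sigma_{\epsilon\tau}}\mathbf{a}\cdot\nu\,\mathrm{ds}\right)\mathrm{d}\tau.
\end{equation*}
The left-hand side converges to $\int_{\Omega}f\,\mathrm{dv}$ by dominated convergence ($f\in L^{1}$, $\vert\varphi_{\epsilon}\vert\le 1$). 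For the right-hand side, continuity of $\mathbf{a}$ on $\overline{\Omega}$ together with the smooth convergence $\Sigma_{\epsilon\tau}\to\partial\Omega$ implies $\int_{\Sigma_{\epsilon\tau}}\mathbf{a}\cdot\nu\,\mathrm{ds}\to\int_{\partial\Omega}\mathbf{a}\cdot\nu\,\mathrm{ds}$ uniformly for $\tau\in\supp\chi'$; a second dominated convergence in $\tau$, together with $\int_{0}^{\infty}\chi'(\tau)\,\mathrm{d}\tau=1$, produces the limit $\int_{\partial\Omega}\mathbf{a}\cdot\nu\,\mathrm{ds}$, and the lemma follows.

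The one delicate step is the uniform convergence of the surface integrals on $\Sigma_{\epsilon\tau}$ to the one on $\partial\Omega$. I would handle this by pulling everything back via the normal exponential map $\Phi(y,t):=\exp_{y}(-t\,\nu(y))$; its $C^{1}$-regularity on $\partial\Omega\times[0,\epsilon_{0})$ (coming from $\partial\Omega\in C^{2}$) ensures that the induced area element on $\Sigma_{t}$ depends continuously on $t$ uniformly in $y\in\partial\Omega$, and uniform continuity of $\mathbf{a}$ on the compact set $\overline{\Omega}$ then closes the argument. All other steps are direct consequences of the distributional definition of $\mathrm{div}\,\mathbf{a}$ and of the coarea formula.
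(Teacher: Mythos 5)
Your argument is correct, and it is genuinely more self-contained than what the paper offers: the paper does not actually prove this lemma, but instead remarks that it is Lemma A.1 of Cuesta--Tak\'a\v{c} in the Euclidean case and asserts that their proof (which rests on approximating $\Omega$ from inside by the parallel domains $\{ \dist(\cdot,\partial\Omega)>\tau\}$, regularizing $\mathbf{a}$ in the interior, applying the classical Gauss--Green formula there, and passing to the limit) carries over to manifolds. Your route is a close cousin of that scheme but streamlines it in a useful way: by testing the distributional identity directly against the cutoffs $\chi(d/\epsilon)$ and converting the resulting transition-layer integral into an average of flux integrals over the level sets $\Sigma_{\epsilon\tau}$ via the coarea formula, you never need to justify the divergence theorem on the approximating domains for a merely continuous field, and you avoid interior mollification altogether --- which, as you note, is the step that is least canonical on a Riemannian manifold. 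The two limits you then take (dominated convergence for $f\in L^{1}$ on the interior side, uniform convergence of the surface integrals on the boundary side) are exactly the two limits implicit in the Cuesta--Tak\'a\v{c} argument, and your treatment of the second one via the normal exponential map $\Phi(y,t)$, using that the induced area element depends continuously on $t$ uniformly in $y$ and that $\mathbf{a}$ is uniformly continuous on $\overline{\Omega}$, is the right way to make the manifold version rigorous. Two small points worth recording explicitly: the test functions $\varphi_{\epsilon}$ are only $C^{2}_{c}$ (or $C^{1,1}_{c}$ if one is cautious about the regularity of $d$), not $C^{\infty}_{c}$, so one should note that the distributional identity extends to such test functions by a routine density argument; and on a manifold $\mathbf{a}$ should be read as a continuous section of $TM$ with $\mathbf{a}\cdot\nu=\langle\mathbf{a},\nu\rangle$, which is how you implicitly use it. Neither affects the validity of the proof.
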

\begin{remark}
 When $M=\mathbb{R}^{n}$,  the above Lemma is a result of Cuesta and Tak$\acute{a}\check{c}$\cite[Lemma A.1.]{CK2000}. This Lemma is still valid for Riemannian manifolds. The proof is similar to that of Cuesta and Tak$\acute{a}\check{c}$, we refer readers to \cite[page 21-22]{CK2000}.
\end{remark}

\section{Proof of the main results}\label{section3}
\text{\emph{Proof\ of\ Theorem}\ \ref{Fundamental Identity Theorem}}.
Let $u$ be a weak solution to the problem \eqref{*} for any $p>1$.  We choose the vector field $\mathbf{a}$ as follows.
\begin{eqnarray}
\mathbf{a}:=(p-2)\vert\nabla u\vert^{p-4}\langle\nabla u,\nabla P\rangle \nabla u+\vert\nabla u\vert^{p-2} \nabla P.
\end{eqnarray}
It is well known that $u\in C^{2}(\Omega\setminus \mathcal{C})$. By a classical result of G. Stampacchia\cite{GT} ( see Lemma 7.7), the second derivatives of $u$ vanish almost everywhere on the critical set \ $\mathcal{C}$. Then we define that $\mathbf{a}=0\ \ {\rm on} \ \mathcal{C}$.  From the regularity of Lemma \ref{lemma Lou} and Corollary 1.1 in \cite{Lou}, we see that the critical set of $u$ has zero measure. Thus we know that $\int_{\Omega}\mathcal{L}_{u}P{\rm dv}=\int_{\Omega}{\rm div}\mathbf{a}\ {\rm dv}\in L^{1}(\Omega)$ in the sense of distributions in $\Omega$. On the other hand,  $\vert\nabla u\vert\neq0$ on $\partial\Omega$. Then we have that $P_{\nu}=u_{\nu}\left((p-1)\vert u_{\nu}\vert^{p-2}u_{\nu\nu}+\frac{1}{n}\right)\ \text{on}\ \partial\Omega$. So
by Lemma \ref{divergence theorem}, we obtain that
\begin{eqnarray}\label{8.2}
\int_{\Omega}\mathcal{L}_{u}P{\rm dv}&=&\int_{\Omega}{\rm div}\mathbf{a}\ {\rm dv} \\
&=&\int_{\partial\Omega}\left((p-2)\vert\nabla u\vert^{p-4}\langle\nabla u,\nabla P\rangle u_{\nu}+\vert\nabla u\vert^{p-2}P_{\nu}\right){\rm ds} \nonumber \\
&=&(p-1)\int_{\partial\Omega}\vert u_{\nu}\vert^{p-2}u_{\nu}\left((p-1)\vert u_{\nu}\vert^{p-2}u_{\nu\nu}+\frac{1}{n}\right){\rm ds}.\nonumber
\end{eqnarray}

It is easy to see that
\begin{eqnarray}\label{9.1}
\int_{\partial\Omega}u_{\nu}\vert u_{\nu}\vert^{p-2}{\rm ds}&=&\int_{\partial\Omega}(\vert \nabla u \vert^{p-2}\nabla u)\cdot \nu {\rm ds} \\
&=&\int_{\Omega}\Delta_{p}u{\rm dv}\nonumber \\
&=&-\vert\Omega\vert.\nonumber
\end{eqnarray}
Substituting \eqref{6.4} and \eqref{9.1} into \eqref{8.2}, we have that
\begin{eqnarray}\label{8.3}
\int_{\Omega}\mathcal{L}_{u}P{\rm dv}&=&-(p-1)(n-1)\int_{\partial\Omega}\left(\frac{1}{n}\vert u_{\nu}\vert^{p-2}u_{\nu}+H\vert u_{\nu}\vert^{2p-2}\right){\rm ds} \\
&=&(p-1)(n-1)\left(\frac{1}{n}\vert\Omega\vert-\int_{\partial\Omega}H\vert u_{\nu}\vert^{2p-2}{\rm ds}\right).\nonumber
\end{eqnarray}
These complete the proof.
$\hfill\square$

\text{\emph{Proof\ of\ Theorem}\ \ref{Soap Bubble Theorem and the Heintze-Karcher inequality}}.
Let $u$ be a weak solution to the problem \eqref{*} for any $p>1$.
From \eqref{9.1} and Theorem \ref{Fundamental Identity Theorem}, we obtain that
\begin{eqnarray}
&&\int_{\partial \Omega}\frac{1}{H}\left(1+nH\vert u_{\nu}\vert^{p-2} u_{\nu}\right)^{2}{\rm ds}\\
&=&\int_{\partial\Omega}\frac{1}{H}{\rm ds}-2n\vert\Omega\vert+n^{2}\int_{\partial\Omega}H\vert u_{\nu}\vert^{2p-2}{\rm ds}\nonumber \\
&=&\int_{\partial \Omega} \frac{1}{H}{\rm ds} -n\vert\Omega\vert-\frac{n^{2}}{(p-1)(n-1)}\int_{\Omega}\mathcal{L}_{u}P{\rm dv}.\nonumber
\end{eqnarray}
These complete the proof.
$\hfill\square$

\text{\emph{Proof\ of\ Corollary}\ \ref{Heintze-Karcher inequality theorem}}.
Let $u$ be a weak solution to the problem \eqref{*} for any $p>1$.
From the proof of the above Theorem 2, we see that the critical set of $u$ has zero measure. By  Lemma \ref{LuP>0 Lemma}, it implies that $\int_{\Omega}\mathcal{L}_{u}P{\rm dv}\geq0.$ Since the mean curvature $H$ is positive on $\partial\Omega$, then according to the integral identity \eqref{SB and HK} it is easy to obtain Heintze-Karcher inequality \eqref{Heintze-Karcher inequality}. For the proof of rigidity result in Corollary \ref{Heintze-Karcher inequality theorem}, we recommend the reader to refer to the paper {\rm \cite{HR}} or {\rm \cite{Ros}}.
$\hfill\square$

\text{\emph{Proof\ of\ Theorem}\ \ref{overdetermined problem theorem}}.
If $M$ is isometric to an Euclidean ball, according to Corollary \ref{Heintze-Karcher inequality theorem}, it is easy to conclude that the right-hand side of the integral identity \eqref{SB and HK} vanishes. This implies that $u_{\nu}\vert u_{\nu}\vert^{p-2}=-\frac{1}{nH}$ holds on $\partial\Omega$.

Now we assume conversely that  $u_{\nu}\vert u_{\nu}\vert^{p-2}=-\frac{1}{nH}$ holds on $\partial\Omega$. From \eqref{*}, \eqref{Fundamental Identity} and \eqref{9.1}, we obtain that
\begin{eqnarray}\label{12.1}
\int_{\Omega}\mathcal{L}_{u}P{\rm dv}&=&\frac{(p-1)(n-1)}{n}\left(\vert\Omega\vert-n\int_{\partial\Omega}H\vert u_{\nu}\vert^{2p-2}{\rm ds}\right)\\
&=&\frac{(p-1)(n-1)}{n}\left(\vert\Omega\vert+\int_{\partial\Omega}\vert u_{\nu}\vert^{p-2}u_{\nu}{\rm ds}\right)\nonumber \\
&=&\frac{(p-1)(n-1)}{n}\left(\vert\Omega\vert+\int_{\Omega}\Delta_{p}u{\rm dv}\right)\nonumber \\
&=&0.\nonumber
\end{eqnarray}
Combining \eqref{SB and HK} and \eqref{12.1}, we know that $\int_{\partial\Omega}\frac{1}{H}{\rm ds}=n\vert\Omega\vert$ holds. According to Corollary \ref{Heintze-Karcher inequality theorem}, these complete the proof.
$\hfill\square$

\text{\emph{Proof\ of\ Theorem}\ \ref{Soap Bubble Theorem}}.
Let $u$ be a weak solution to the problem \eqref{*} for any $p>1$.
From \eqref{9.1}, we have the following identity
\begin{eqnarray}\label{9.2}
\frac{1}{n^{2}H_{0}}\int_{\partial\Omega}(nu_{\nu}\vert u_{\nu}\vert^{p-2}H_{0}+1)^{2}{\rm ds}
=H_{0}\int_{\partial\Omega}\vert u_{\nu}\vert^{2p-2}{\rm ds}-\frac{1}{n}\vert\Omega\vert.
\end{eqnarray}
Hence
\begin{eqnarray}\label{9.3}
\int_{\partial\Omega}H\vert u_{\nu}\vert^{2p-2}{\rm ds}
&=&H_{0}\int_{\partial\Omega}\vert u_{\nu}\vert^{2p-2}{\rm ds}+\int_{\partial\Omega}(H-H_{0})\vert u_{\nu}\vert^{2p-2}{\rm ds} \\
&=&\frac{1}{n^{2}H_{0}}\int_{\partial\Omega}(nu_{\nu}\vert u_{\nu}\vert^{p-2}H_{0}+1)^{2}{\rm ds}+\frac{1}{n}\vert\Omega\vert \nonumber \\
&&+\int_{\partial\Omega}(H-H_{0})\vert u_{\nu}\vert^{2p-2}{\rm ds}.\nonumber
\end{eqnarray}
Substituting \eqref{9.3} into \eqref{8.3}. These complete the proof.
$\hfill\square$

\text{\emph{Proof\ of\ Corollary}\ \ref{overdetermined problem theorem 2}}.
Let $u$ be a weak solution to the problem \eqref{*} for any $p>1$.
If the mean curvature $H=H_{0}$ on $\partial\Omega$, then the right-side of the equality \eqref{SBT} vanishes. Combining Theorem \ref{Soap Bubble Theorem} and Lemma \ref{LuP>0 Lemma}, we deduce that $\vert u_{\nu}\vert^{p-2}u_{\nu}=-\frac{1}{nH}$ on $\partial\Omega$, and then according to Theorem \ref{overdetermined problem theorem}, these complete the proof.
$\hfill\square$

\def\cprime{$'$}

\end{document}